\newcommand{\bbfont}{\mathbb}
\newcommand{\tfs}[1]
{
\ifthenelse{\equal{\f@shape}{n}}{\ensuremath{\mathrm{#1}}}
	{\ifthenelse{\equal{\f@shape}{sc}}{\ensuremath{\mathrm{#1}}}
		{\ifthenelse{\equal{\f@shape}{it}}{\ensuremath{\mathit{#1}}}
			{\ifthenelse{\equal{\f@shape}{sl}}{\ensuremath{\mathit{#1}}}{}	
			}
		}
	}
}
\newcommand{\btfs}[1]
{
\ifthenelse{\equal{\f@shape}{n}}{\ensuremath{\mathrm{#1}}}
	{\ifthenelse{\equal{\f@shape}{sc}}{\ensuremath{\mathrm{#1}}}
		{\ifthenelse{\equal{\f@shape}{it}}{\ensuremath{\mathit{#1}}}
			{\ifthenelse{\equal{\f@shape}{sl}}{\ensuremath{\mathit{#1}}}{}	
			}
		}
	}
}
\newcommand{\RR}{{\bbfont R}}
\newcommand{\NN}{{\bbfont N}}
\newcommand{\ulp}{{\textup{(}}}
\newcommand{\urp}{{\textup{)}}}
\newcommand{\uppars}[1]{\ulp #1\urp}
\newcommand{\abs}[1]{{\lvert #1 \rvert}}
\newcommand{\norm}[1]{{\lVert #1 \rVert}}
\newcommand{\braces}[1]{{\{ #1\}}}
\newcommand{\lrnorm}[1]{{\left\lVert #1 \right\rVert}}
\newcommand{\bignorm}[1]{{\big\lVert #1 \big\rVert}}
\newcommand{\desset}[1]{\braces{\,#1\,}}
\theoremstyle{plain}
\newtheorem{theorem}{Theorem}[section]
\newtheorem{proposition}[theorem]{Proposition}
\newtheorem{lemma}[theorem]{Lemma}
\newtheorem*{theorem*}{Theorem}
\newtheorem*{proposition*}{Proposition}
\newtheorem*{lemma*}{Lemma}
\newtheorem*{corollary*}{Corollary}
\newtheorem*{conjecture*}{Conjecture}
\newtheorem*{assumption*}{Assumption}
\newtheorem*{hypothesis*}{Hypothesis}
\newtheorem*{problem*}{Problem}
\newtheorem*{task*}{Task}
\newtheorem*{addendum*}{Addendum}
\newtheorem*{idea*}{Idea}
\newtheorem*{suggestion*}{Suggestion}
\newtheorem*{context*}{Context}
\newtheorem*{exercise*}{Exercise}
\theoremstyle{definition}
\newtheorem{definition}[theorem]{Definition}
\newtheorem{example}[theorem]{Example}
\newtheorem{examples}[theorem]{Examples}
\newtheorem{remark}[theorem]{Remark}
\newtheorem{questions}[theorem]{Questions}
\newtheorem*{definition*}{Definition}
\newtheorem*{example*}{Example}
\newtheorem*{examples*}{Examples}
\newtheorem*{remark*}{Remark}
\newtheorem*{question*}{Question}
\newtheorem*{questions*}{Questions}
\setlist[enumerate,1]{label=\textup{(\arabic*)},ref=\arabic*}
\setlist[enumerate,2]{label=\textup{(\alph*)},ref=\arabic{enumi}.\alph*}
\setlist[enumerate,3]{label=\textup{(\roman*)},ref=\arabic{enumi}.\alph{enumii}.\roman*}
\setlist[enumerate,4]{label=\textup{(\Alph*)},ref=\arabic{enumi}.\alph{enumii}.\roman{enumiii}.\Alph*}
\crefname{theorem}{Theorem}{Theorems}
\crefname{proposition}{Proposition}{Propositions}
\crefname{lemma}{Lemma}{Lemmas}
\crefname{corollary}{Corollary}{Corollaries}
\crefname{conjecture}{Conjecture}{Conjectures}
\crefname{definition}{Definition}{Definitions}
\crefname{example}{Example}{Examples}
\crefname{examples}{Examples}{Examples}
\crefname{remark}{Remark}{Remarks}
\crefname{assumption}{Assumption}{Assumptions}
\crefname{hypothesis}{Hypothesis}{Hypotheses}
\crefname{question}{Question}{Questions}
\crefname{problem}{Problem}{Problems}
\crefname{task}{Task}{Tasks}
\crefname{addendum}{Addendum}{Addenda}
\crefname{idea}{Idea}{Ideas}
\crefname{suggestion}{Suggestion}{Suggestions}
\crefname{context}{Context}{Contexts}
\crefname{exercise}{Exercise}{Exercises}
\crefname{section}{Section}{Sections}
\crefname{subsection}{Section}{Sections}
\crefname{subsubsection}{Section}{Sections}
\crefname{questions}{Question}{Questions}
\crefname{equation}{equation}{equations}
\crefname{enumi}{part}{parts}
\crefname{enumii}{part}{parts}
\crefname{enumiii}{part}{parts}
\crefname{enumiv}{part}{parts}
\newcommand{\enclosepart}[1]{(#1)}
\newcommand{\partref}[1]{\enclosepart{\ref{#1}}}
\numberwithin{equation}{section}
\tikzset{
	edge node/.code={%
		\expandafter\def\expandafter\tikz@tonodes\expandafter{\tikz@tonodes #1}}}
\tikzset{
	subseteq/.style={
		draw=none,
		edge node={node [sloped, allow upside down, auto=false]{$\subseteq$}}},
	Subseteq/.style={
		draw=none,
		every to/.append style={
			edge node={node [sloped, allow upside down, auto=false]{$\subset$}}}}
}
\newcommand{\fact}[1]{{\overline{#1}}}
\newcommand{\factfact}[1]{{\fact{\fact{#1}}}}
\renewcommand{\aa}{A}
\newcommand{\aaone}{{\aa^{1}}}
\newcommand{\vla}{A}
\newcommand{\vlaone}{{\vla}^1}
\newcommand{\vlaonepos}{{\vla^{1+}}}
\newcommand{\ba}{A}
\newcommand{\alwedge}{\owedge}
\newcommand{\alvee}{\ovee}
\newcommand{\absalg}{A}
\newcommand{\absalgtwo}{B}
\newcommand{\type}{(\mathcal F,\rho)}
\newcommand{\termalg}[1]{T_{\type}(#1)}
\newcommand{\termalgS}{{\termalg{S}}}
\newcommand{\infset}{S_{\aleph_0}}
\newcommand{\termalginfset}{{\termalg{\infset}}}
\newcommand{\catfont}[1]{\textup{\fontfamily{qag}\selectfont{\footnotesize #1}}}
\newcommand{\catfontscript}[1]{\textup{\fontfamily{qag}\selectfont{\tiny{#1}}}}
\newcommand{\catsymbol}{\catfont{Cat}}
\newcommand{\catone}{\catsymbol_1}
\newcommand{\cattwo}{\catsymbol_2}
\newcommand{\catthree}{\catsymbol_3}
\newcommand{\catsymbolscript}{\catfontscript{Cat}}
\newcommand{\catonescript}{{\catsymbolscript}_1}
\newcommand{\cattwoscript}{{\catsymbolscript}_2}
\newcommand{\catthreescript}{\catsymbolscript_3}
\newcommand{\SET}{\catfont{Set}}
\newcommand{\VS}{\catfont{VS}}
\newcommand{\VL}{\catfont{VL}}
\newcommand{\VLA}{\catfont{VLA}}
\newcommand{\VLAONE}{{\VLA}^1}
\newcommand{\VLAONEPOS}{{\VLA}^{1+}}
\newcommand{\LAT}{\catfont{Lat}}
\newcommand{\SETscript}{\catfontscript{Set}}
\newcommand{\VSscript}{\catfontscript{VS}}
\newcommand{\VLscript}{\catfontscript{VL}}
\newcommand{\VLAscript}{\catfontscript{VLA}}
\newcommand{\VLAONEscript}{{\VLAscript}^1}
\newcommand{\VLAONEPOSscript}{{\VLAscript}^{1+}}
\newcommand{\LATscript}{\catfontscript{Lat}}
\newcommand{\ALG}{\catfont{Alg}}
\newcommand{\ALGONE}{{\ALG}^1}
\newcommand{\MET}{\catfont{Met}}
\newcommand{\COMMET}{\catfont{ComMet}}
\newcommand{\BA}{\catfont{BA}}
\newcommand{\ALGRHO}{\catfont{AbsAlg}_{\type}}
\newcommand{\ALGRHOSIGMA}{\catfont{AbsAlg}_{\type; \Sigma}}
\newcommand{\GRP}{\catfont{Grp}}
\newcommand{\ABGRP}{\catfont{AbGrp}}
\newcommand{\ALGscript}{\catfontscript{Alg}}
\newcommand{\ALGONEscript}{{\ALGscript}^1}
\newcommand{\METscript}{\catfontscript{Met}}
\newcommand{\COMMETscript}{\catfontscript{ComMet}}
\newcommand{\BAscript}{\catfontscript{BA}}
\newcommand{\ALGRHOscript}{\catfontscript{AbsAlg}_{\type}}
\newcommand{\ALGRHOSIGMAscript}{\catfontscript{AbsAlg}_{\type; \Sigma}}
\newcommand{\GRPscript}{\catfontscript{Grp}}
\newcommand{\ABGRPscript}{\catfontscript{AbGrp}}
\newcommand{\freeletter}{\textup{F}}
\newcommand{\Free}[3]{\freeletter_{#1}^{#2}\!\left[#3\right]}
\newcommand{\FCATONECATTWO}[1]{\Free{\catonescript}{\cattwoscript}{#1}}
\newcommand{\FCATTWOCATTHREE}[1]{\Free{\cattwoscript}{\catthreescript}{#1}}
\newcommand{\FCATONECATTHREE}[1]{\Free{\catonescript}{\catthreescript}{#1}}
\newcommand{\FSETVS}[1]{\Free{\SETscript}{\VSscript}{#1}}
\newcommand{\FSETVL}[1]{\Free{\SETscript}{\VLscript}{#1}}
\newcommand{\FSETVLA}[1]{\Free{\SETscript}{\VLAscript}{#1}}
\newcommand{\FSETVLAONE}[1]{\Free{\SETscript}{\VLAONEscript}{#1}}
\newcommand{\FSETVLAONEPOS}[1]{\Free{\SETscript}{\VLAONEPOSscript}{#1}}
\newcommand{\FVSVL}[1]{\Free{\VSscript}{\VLscript}{#1}}
\newcommand{\FVSVLA}[1]{\Free{\VSscript}{\VLAscript}{#1}}
\newcommand{\FVSVLAONE}[1]{\Free{\VSscript}{\VLAONEscript}{#1}}
\newcommand{\FVSVLAONEPOS}[1]{\Free{\VSscript}{\VLAONEPOSscript}{#1}}
\newcommand{\FVLVLA}[1]{\Free{\VLscript}{\VLAscript}{#1}}
\newcommand{\FVLVLAONE}[1]{\Free{\VLscript}{\VLAONEscript}{#1}}
\newcommand{\FVLVLAONEPOS}[1]{\Free{\VLscript}{\VLAONEPOSscript}{#1}}
\newcommand{\FVLAVLAONE}[1]{\Free{\VLAscript}{\VLAONEscript}{#1}}
\newcommand{\FVLAVLAONEPOS}[1]{\Free{\VLAscript}{\VLAONEPOSscript}{#1}}
\newcommand{\FVLAONEVLAONE}[1]{\Free{\VLAONEscript}{\VLAONEscript}{#1}}
\newcommand{\FVLAONEVLAONEPOS}[1]{\Free{\VLAONEscript}{\VLAONEPOSscript}{#1}}
\newcommand{\FLATVL}[1]{\Free{\LATscript}{\VLscript}{#1}}
\newcommand{\FLATVLA}[1]{\Free{\LATscript}{\VLAscript}{#1}}
\newcommand{\FLATVLAONE}[1]{\Free{\LATscript}{\VLAONEscript}{#1}}
\newcommand{\FLATVLAONEPOS}[1]{\Free{\LATscript}{\VLAONEPOSscript}{#1}}
\newcommand{\FMETCOMMET}[1]{\Free{\METscript}{\COMMETscript}{#1}}
\newcommand{\FSETBA}[1]{\Free{\SETscript}{\BAscript}{#1}}
\newcommand{\FALGALGONE}[1]{\Free{\ALGscript}{\ALGONEscript}{#1}}
\newcommand{\FSETALGRHO}[1]{\Free{\SETscript}{\ALGRHOscript}{#1}}
\newcommand{\FSETALGRHOSIGMA}[1]{\Free{\SETscript}{\ALGRHOSIGMAscript}{#1}}
\newcommand{\FGRPABGRP}[1]{\Free{\GRPscript}{\ABGRPscript}{#1}}
\begin{document}


\title [Free vector lattices and free vector lattice algebras]{Free vector lattices and free vector lattice algebras}

\author{Marcel de Jeu}
\address{\!\!Mathematical Institute, Leiden University, P.O.\ Box 9512, 2300 RA Leiden, the Netherlands;
and
Department of Mathematics and Applied Mathematics, University of Pretoria, Cor\-ner of Lynnwood Road and Roper Street, Hatfield 0083, Pretoria, South Africa}
\email{mdejeu@math.leidenuniv.nl}


\dedicatory{Dedicated to the memory of Coenraad Labuschagne}


\keywords{Free vector lattice, free vector lattice algebra, equational class}

\subjclass[2010]{Primary 46A40; Secondary 06F25}


\begin{abstract}
We show how the existence of various free vector lattices and free vector lattice algebras can be derived from a theorem on equational classes in universal algebra. A discussion about free $\!f\!$-algebras over non-empty sets is given, where the main issues appear to be open. It is indicated how the existence results for free vector lattices and vector lattice algebras can be used for easy proofs of existence results for free Banach lattices and free Banach lattice algebras. A detailed exposition of the necessary material from universal algebra is included.
\end{abstract}

\maketitle


\section{Introduction and overview}\label{sec:introduction_and_overview}

\noindent In recent years, there has been a growing interest in free Banach lattices. Definitions have been given of a free Banach lattice over a set (see \cite{de_pagter_wickstead:2015}), over a Banach space (see \cite{aviles_rodriguez_tradacete:2018,aviles_tradacete_villanueva:2019,troitsky:2019}), and over a lattice (see \cite{aviles_rodriguez-abellan:2019}). These objects have been shown to exist, and properties beyond their mere existence have been studied.

The starting point for the existence proofs in these papers is a concrete model for a free object that has been obtained earlier. The most basic of these concrete models appears to be the usual model for the free vector lattice over a non-empty set $S$ as a sublattice of $\mathbb R^{\mathbb R^S}$; see \cite{baker_1968} or \cite{bleier:1973}, for example. In \cite{de_pagter_wickstead:2015}, this concrete model is then used in the construction of the free Banach lattice over a non-empty set $S$ (see \cite[Definition~4.4]{de_pagter_wickstead:2015}). Likewise, it is an ingredient in the construction of the free Banach lattice over a Banach space (see \cite[beginning of the proof of Theorem~2.5]{aviles_rodriguez_tradacete:2018}). In \cite[p.~583]{aviles_rodriguez-abellan:2019}, the existence of the free Banach lattice over a non-empty set, as established in \cite{de_pagter_wickstead:2015}, is used to construct the free Banach lattice over a vector lattice. This construction is, therefore, in the end also based on the usual concrete model for the free vector lattice over a non-empty set. In \cite{troitsky:2019}, this model is used to construct the free Banach lattice over non-empty sets again (simplifying the existence proof in \cite{de_pagter_wickstead:2015}); this, in turn, is used to construct the free Banach lattice over a Banach space (simplifying the existence proof in \cite{aviles_rodriguez_tradacete:2018}).

It seems to have escaped notice so far that there is an alternative and, as we believe, simpler way to obtain the existence of such free functional analytic objects. The general strategy is to start with the mere existence\textemdash a concrete model is not needed\textemdash of a corresponding free object in an algebraic context and then, almost as an afterthought, add the norm to the picture by using a few standard constructions. Let us give a detailed example on how to construct a free Banach lattice over a Banach space along these lines.

Suppose that $X$ is a (real) Banach space. By \cref{res:overview_algebraic}, below, there exist a vector lattice $E$ and a map $j:X\to E$ with the property that, for every vector lattice $Y$ and for every linear map $\varphi:X\to Y$, there exists a unique vector lattice homomorphism $\fact{\varphi}$ such that the diagram
\begin{equation*}\label{dia:banach_1}
\begin{tikzcd}
X\arrow[r, "j"]\arrow[dr, "\varphi"]& E\arrow[d, "\fact{\varphi}"]
\\& Y
\end{tikzcd}
\end{equation*}
is commutative. Such a vector lattice $E$ is called a free vector lattice over the vector space $X$. It is easy to see that $E$ is generated, as a vector lattice, by its subset $j(X)$. For $e\in E$, set
\[
\rho(e)\!\coloneqq \sup\left\{\norm{\fact{\Psi}(e)}\!:\!Y\!\text{ is a Banach lattice and }\Psi:X\!\to Y\text{ is a contraction}\right\}.
\]
Using the correspondence between lattice seminorms on $E$ and vector lattice homomorphisms from $E$ into Banach lattices, one easily sees that one can, equivalently, set
\[
\rho(e)\!\coloneqq \sup\left\{\sigma(e)\!:\! \sigma\!\text{ is a lattice seminorm on }\!E\! \text{ and }\sigma\!\circ\! j\text{ is contractive on }\!X\right\},
\]
thereby avoiding possible set-theoretical subtleties.

For $x\in X$, it is clear that $\rho(j(x))\leq \norm{x}$. Since the subset of $E$ on which $q$ is finite is easily seen to be a vector sublattice of $E$, and since $j(X)$ generates $E$ as a vector lattice, we conclude that $\rho$ is a lattice seminorm on $E$.\footnote{It is a non-trivial fact that $\rho$ is actually a lattice norm on $E$; this follows from \cite[Theorem~3.1]{troitsky:2019}. For the present construction to go through this is, however, not needed.} The kernel of $\rho$ is an order ideal in $E$. We let $q:E\to E/\ker \rho$ denote the quotient map. On setting $\norm{q(e)}\coloneqq \rho(e)$ for $e\in E$, the vector lattice $E/\ker \rho$ becomes a normed vector lattice. It is then immediate that $q\circ j:X\to E/\ker \rho$ is contractive.

Let $Y$ be a Banach lattice and let $\varphi: X\to Y$ be a bounded linear map. Suppose that $e\in E$ is such that $\rho(e)=0$. We claim that then also $\fact{\varphi}(e)=0$. This is clear if $\varphi=0$. When $\varphi\neq 0$, so that $\varphi/\norm{\varphi}$ is a contraction, this follows from the definition of $\rho$.
Hence there exists a unique vector lattice homomorphism $\factfact{\varphi}$ such that the diagram
\begin{equation}\label{dia:banach_2}
\begin{tikzcd}
X\arrow[r, "j"]\arrow[dr,"\varphi"]& E \arrow[r,"q"]\arrow[d, "\fact{\varphi}"]&E/\ker \rho\arrow[dl,"\factfact{\varphi}"]\\
&Y&
\end{tikzcd}
\end{equation}
is commutative.

We claim that $\factfact{\varphi}$ is bounded and that $\bignorm{\factfact{\varphi}}=\lrnorm{\varphi}$. We may suppose that $\varphi\neq 0$, so that $\varphi/\norm{\varphi}$ is a contraction. For $e\in E$, we then have
\begin{align*}
\bignorm{\factfact{\varphi}(q(e))}&=\bignorm{\fact{\varphi}(e)}\\
&=\lrnorm{\varphi}\,\lrnorm{\overline{\left(\frac{\varphi}{\norm{\varphi}}\right)}(e)}\\
&\leq \lrnorm{\varphi}\,\rho(e)\\
&=\lrnorm{\varphi}\,\norm{q(e)}.
\end{align*}
Hence $\factfact{\varphi}$ is bounded and $\bignorm{\factfact{\varphi}}\leq\lrnorm{\varphi}$. On the other hand, the fact that $\varphi=\factfact{\varphi}\circ(q\circ j)$, combined with the fact that $q\circ j$ is contractive, shows that $\lrnorm{\varphi}\leq \bignorm{\factfact{\varphi}}$. Hence $\bignorm{\factfact{\varphi}}=\lrnorm{\varphi}$, as claimed.

There exists an isometric linear map from $X$ into a Banach lattice. Indeed, the canonical embedding of $X$ into the bounded real-valued functions on the unit ball of its dual is such a map.
Take such an isometric linear embedding for $\varphi$. Then $\bignorm{\factfact{\varphi}}=\norm{\varphi}\leq 1$. Since we already know that $q\circ j$ is contractive, we have, for $x\in X$,
\begin{align*}
\norm{x}&=\norm{\varphi(x)}\\
&=\bignorm{\,[\factfact{\varphi}\circ(q\circ j)](x)\,}\\
&\leq \bignorm{\factfact{\varphi}}\norm{(q\circ j)(x)}\\
&= \norm{\varphi}\norm{(q\circ j)(x)}\\
&\leq \norm{(q\circ j)(x)}\\
&\leq \norm{x}.
\end{align*}

We conclude that $q\circ j$ is isometric.

We note that, since $(q\circ j)(X)$ generates $E/\ker \rho$ as a vector lattice, $\factfact{\varphi}: E/\ker\rho\to Y$ is uniquely determined as a vector lattice homomorphism by the requirement that $\varphi=\factfact{\varphi}\circ(q\circ j)$.

We have thus found a normed vector lattice $E/\ker\rho$ and an isometric linear map $q\circ j:X\to E/\ker\rho $ with the property that, for every Banach space $Y$, and for every bounded linear map $\varphi:X\to Y$, there exists a unique vector lattice homomorphism $\fact{\varphi}: E/\ker\rho\to Y$ such that the diagram
\begin{equation*}
\begin{tikzcd}
X\arrow[r, "q\circ j"]\arrow[dr, "\varphi"]& E/\ker\rho\arrow[d, "\fact{\varphi}"]
\\& Y
\end{tikzcd}
\end{equation*}
is commutative. Moreover, $\fact{\varphi}$ is a bounded linear operator, and $\bignorm{\factfact{\varphi}}=\norm{\varphi}$.

Let $F$ be the norm completion of $E/\ker\rho$, and set $j_F\coloneqq q\circ j$, seen as a map from $X$ into $F$. We see, removing the construction from the notation, that there exist a Banach lattice $F$ and an isometric linear map $j:X\to F$ with the property that, for every Banach lattice $Y$, and for every bounded linear map $\varphi:X\to Y$, there exists a unique vector lattice homomorphism $\varphi: F\to Y$ with $\norm{\fact{\varphi}}=\norm{\varphi}$ such that the diagram
\begin{equation*}
\begin{tikzcd}
X\arrow[r, "j"]\arrow[dr, "\varphi"]& F \arrow[d, "\fact{\varphi}"]
\\& Y
\end{tikzcd}
\end{equation*}
is commutative. Using only the existence of a free vector lattice over a vector space as a starting point, we have thus retrieved the existence of a free Banach lattice over a Banach space as was first established in \cite{aviles_rodriguez_tradacete:2018}.

The type of arguments in the preceding construction have been used earlier in the papers cited above, but this was always done using the setting in which a concrete model for the free object was constructed. The existence of the free object and the proof that a particular object was a concrete model for it came at the same time. In the above line of reasoning, however, nothing specific is used. Once one has the existence of a free vector lattice over a vector space, the rest is an argument that takes place in the category of vector lattices and in that of Banach spaces.

It is clear that this abstract approach can be used in other situations. One starts with a free object in the algebraic context, introduces an appropriate seminorm on it, divides out its kernel, and completes. For example, the free Banach lattice over a lattice from \cite{aviles_rodriguez-abellan:2019} can also be constructed along these lines, with the existence of the free vector lattice over a lattice (see \cref{res:free_objects_over_lattices}, below) as a starting point. In fact, all free objects in the algebraic context in \cref{res:overview_algebraic,res:free_objects_over_lattices}, below, can be used to construct their functional analytic counterparts. We intend to report separately on this in the future. In particular, it will then be seen that free Banach lattice algebras over non-empty sets exist, as well as unitisations of Banach lattice algebras; this solves Problems~13 and~15 in Wickstead's list \cite{wickstead:2017c}. In the case of Banach lattice algebras, it is then necessary to incorporate certain bounds into the construction. The final example in \cref{ex:free_objects}, below, may serve to make this plausible.

We hasten to add that this general abstract approach will only lead to an easy existence proof for certain free functional analytic objects. It will not inform us how to find a concrete model that enables a further study of its structure. For example, as an algebraic analogue, it is not difficult to see that free vector lattices over non-empty sets exist (this follows from a general result in universal algebra), but it requires creativity  as in \cite{bleier:1973} to show that they can be realised as lattices of functions. It is only then that it becomes clear that they are Archimedean. Our approach can, therefore, not replace the papers cited above where the structure of various free functional analytic objects is studied. The virtue of the general abstract approach is that it provides a smooth standard route to the basic existence result, after which the actual work can begin.

\smallskip

\noindent The present paper is intended to provide an algebraic basis for future existence results for free Banach lattices and Banach lattice algebras. As will become clear in the sequel, it is actually quite easy to prove that, for example, free vector lattice algebras with positive identity elements over non-empty sets exist. One merely needs to show that these are a so-called equational class, which is not too difficult, and then invoke a general theorem from universal algebra that holds for such classes to conclude the proof. It is even easier to show  the existence of free vector lattices over non-empty sets along these lines.

It seems, however, as if this possibility of invoking a ready-to-use result from universal algebra in the context of vector lattices and vector lattice algebras was once known, but has later faded into the background. In his 1973 paper \cite{bleier:1973} already mentioned above, Bleier showed that the free vector lattice over a set has the usual concrete model as a vector lattice of functions. It is instrumental for him to know a priori that such a free vector lattice exists, but he does not even find it necessary to give a reference for this existence result. There is a simple remark \emph{`If $S$ is a non-empty set, then, since the class vector lattices is equationally definable, there exists a \uppars{unique up to isomorphism} vector lattice $F$ which is free on $S$'} (see \cite[p.74]{bleier:1973}). A reference is then given to the general theory of free abstract algebras in \cite[p.143-144]{birkhoff_LATTICE_THEORY_THIRD_EDITION:1967}.
Apparently, this was sufficient in that period of time.

On the other hand, the Problems~13 and~15 on Banach lattice algebras in \cite{wickstead:2017c}, already mentioned above, were posed at a 2014 workshop at the Lorentz Center in Leiden where various senior researchers in the field of positivity were present. None of them was aware of the fact that, at the algebraic level, the existence of free vector lattice algebras over non-empty sets and of the unitisations of vector lattice algebras can easily be derived from one single theorem in universal algebra.

Therefore, apart from providing an algebraic basis for future existence results for free Banach lattices and Banach lattice algebras, this paper is also intended to re-vitalise this knowledge of universal algebra in the specific context of lattices, and in such a way that it can easily be used in other situations. The main theorem we need can be found in textbooks on universal algebra, but there it is among much more material that is not relevant for our purposes, and it may take some effort to isolate what one actually needs. Furthermore, vector lattice algebras, for example, are hard to find in such books\textemdash if at all\textemdash and we really need to recognise them as abstract algebras where certain identities are satisfied. It is not directly obvious how one can capture a partial ordering and the existence of suprema and infima in identities, but the fact that this is nevertheless possible (see \cref{res:two_types_of_lattices_are_equivalent}, below) is crucial. We have, therefore, included the details for everything we need; the paper is self-contained. Our coverage of the material on universal algebra, culminating in the existence of free objects of equational classes over non-empty sets (see \cref{res:free_algebra_with_relations_exists}, below), is an exposition of parts of a known theory. It is, however, a very selective one, aiming for the one result we need and nothing more, and tailored to the context of lattices. We also need some basic known facts about free objects in a categorical language. The remainder of the paper, as well as the blend of category theory, universal algebra and vector lattices and vector lattice algebras, appears to be new.

\smallskip

\noindent This paper is organised as follows.

In \cref{sec:free_objects}, we introduce the notion of a free object. Some examples are given, and preparations for later sections are made.

In \cref{sec:universal_algebra_part_I}, we start our exposition on universal algebra. The goal is the existence theorem for a free (abstract) algebra of a given type over a non-empty set; see \cref{res:term_algebra_is_free}, below.

\cref{sec:various_lattices_as_abstract_algebras_satisfying_identities} is concerned with a crucial point: capturing the partial ordering and the existence of infima and suprema in a lattice in identities. \cref{res:VLAONE_is_equational_class}, below, shows that the unital vector lattice algebras are precisely the abstract algebras of a certain type where a list of identities are satisfied.

After \cref{sec:various_lattices_as_abstract_algebras_satisfying_identities}, there is a need to formalise the notion of an abstract algebra `satisfying identities'. This is done in \cref{sec:universal_algebra_part_II}, where we continue our exposition on universal algebra and equational classes are introduced. \cref{res:free_algebra_with_relations_exists}, below, is the result we are after. It guarantees the existence of free abstract algebras in equational classes over non-empty sets.

In \cref{sec:free_vector_lattices_and_free_vector_lattice_algebras}, the harvest is brought in. We start by inferring the existence of free unital vector lattice algebras over non-empty sets. From this one existence result, the existence of a host of other free objects is established. Together with their interrelations, they are collected in \cref{res:overview_algebraic}, below.

In \cref{sec:free_objects_over_lattices}, the existence of a number of free objects over (not necessarily distributive) lattices is shown; see \cref{res:free_objects_over_lattices}, below. Although an independent approach is also possible, we have chosen to derive the results in this section from those in \cref{sec:free_vector_lattices_and_free_vector_lattice_algebras}.

Finally, in \cref{sec:free_f-algebras}, we consider various types of free $\!f\!$-algebras over non-empty sets. We can neither prove nor disprove that they exist.
It is motivated how the observation that the free vector lattice over a non-empty set is, in fact, Archimedean can lead one to wonder whether free vector lattice algebras are, in fact, $f\!$-algebras.

\smallskip

\noindent We conclude this section by mentioning some terminology and conventions.

All vector spaces are over the real numbers. An \emph{algebra} is an associative algebra. An algebra need not be unital. An algebra homomorphism between two unital algebras need not be unital. When convenient, a unital algebra will be denoted by $\aaone$. A \emph{vector lattice algebra}, also called a Riesz algebra in the literature, is  a vector lattice that is also an algebra such that the product of two positive elements is positive. The identity element of a unital vector lattice algebra need not be positive. A \emph{vector lattice algebra homomorphism} between vector lattice algebras is a lattice homomorphism that is also an algebra homomorphism. When convenient, a unital vector lattice algebra with a positive identity element will be denoted by $\vlaonepos$. A \emph{bi-ideal} in a vector lattice algebra is a linear subspace that is an order ideal as well as a two-sided algebra ideal.

We let $\NN_0=\{0,1,2\ldots\}$.

\section{Free objects}\label{sec:free_objects}

\noindent In this section, we review basic facts about free objects. This notion, to be defined below, can be introduced whenever a category is a subcategory of another category. In our case, the main six categories of interest are:
\begin{itemize}
	\item $\SET$: the sets with the maps as morphisms;
	\item $\VS$: the vector spaces, with the linear maps as morphisms;
	\item $\VL$: the vector lattices, with the vector lattice homomorphisms as morphisms;
	\item $\VLA$: the vector lattice algebras, with the vector lattice algebra homomorphisms as morphisms;
	\item $\VLAONE$: the unital vector lattice algebras, with the unital vector lattice algebra homomorphisms as morphisms;
	\item $\VLAONEPOS$: the unital vector lattice algebras that have a positive identity element, with the unital vector lattice algebra homomorphisms as morphisms.
\end{itemize}

There will also be an appearance of $\LAT$, the category of not necessarily distributive lattices, with lattice homomorphisms as morphism. There are two ways to define lattices. The fact that these two are equivalent (see \cref{res:two_types_of_lattices_are_equivalent}) is essential for this paper.

The main six categories of our interest can be ordered in the following chain. With the exception of $\SET$, each is a subcategory of the one to the left of it:
\begin{equation}\label{eq:chain_of_categories}
\SET \supset \VS \supset \VL \supset \VLA \supset \VLAONE \supset \VLAONEPOS.
\end{equation}
Except for $\VLAONE \supset \VLAONEPOS$, all these subcategories are non-full subcategories.

All in all, there are 15 instances of a category and a subcategory of it associated to this chain. For each of these, there is a notion of free objects. We shall now define this.

\begin{definition}\label{def:free_object}
	Suppose that $\catone$ and $\cattwo$ are categories, and that $\cattwo$ is a subcategory of $\catone$. Take an object $O_1$ of $\catone$. Then a \emph{free object over $O_1$ of $\cattwo$} is a pair $(j,\FCATONECATTWO{O_1})$, where $\FCATONECATTWO{O_1}$ is an object of $\cattwo$ and $j:O_1\to\FCATONECATTWO{O_1}$ is a morphism in $\catone$,
	with the property that, for every object $O_2$ of $\cattwo$ and every morphism $\varphi:O_1\to O_2$ of $\catone$, there exists a unique morphism  $\fact{\varphi}: \FCATONECATTWO{O_1}\to O_2$ of $\cattwo$ such that the diagram
	\begin{equation*}\label{dia:free_object}
	\begin{tikzcd}
	O_1\arrow[r, "j"]\arrow[dr, "\varphi"]& \FCATONECATTWO{O_1}\arrow[d, "\fact{\varphi}"]
	\\ & O_2
	\end{tikzcd}
	\end{equation*}
	in $\catone$ is commutative.
\end{definition}

Suppose that $(j^\prime,\FCATONECATTWO{O_1}^\prime)$ is another pair with this property. The unique morphism $\fact{j^\prime}:\FCATONECATTWO{O_1}\to \FCATONECATTWO{O_1}^\prime$ such that $j^\prime=\fact{j^\prime}\circ j$ and the unique morphism $\fact{j}:\FCATONECATTWO{O_1}^\prime\to\FCATONECATTWO{O_1}$ such that $j=\fact{j}\circ j^\prime$ are then such that $\fact{j}\circ\fact{j^\prime}$ is the identity morphism of $\FCATONECATTWO{O_1}$ and  $\fact{j^\prime}\circ\fact{j}$ is the identity morphism of $\FCATONECATTWO{O_1}^\prime$. Hence $\FCATONECATTWO{O_1}$, if it exists, is uniquely determined up to a unique compatible isomorphism. When convenient, we shall, therefore, simply say that \emph{$\FCATONECATTWO{O_1}$ exists} when there exists a pair $(j,\FCATONECATTWO{O_1})$ as above, and let $\FCATONECATTWO{O_1}$ stand for any realisation of it, the accompanying map $j$ being understood.

\begin{examples}\label{ex:free_objects}\quad
	\begin{enumerate}
		\item Let $\GRP$ denote the category of groups with the group homomorphisms as morphisms, and let $\ABGRP$ denote its full subcategory of abelian groups. Take a group $G$. Then $\FGRPABGRP{G}$ exists and is the quotient of $G$ modulo its commutator subgroup.
		\item Take a non-empty set $S$. Then $\FSETVS{S}$ exists and is the vector space of real-valued functions on $S$ with finite supports.
		\item Let $\ALG$ denote the category of algebras with the algebra homomorphisms as morphisms, and let $\ALGONE$ denote its subcategory of unital algebras with the unital algebra homomorphisms as morphisms. Take an algebra  $\aa$. Set $\aaone\coloneqq \RR\oplus \aa$, as a vector space direct sum, and supply it with the usual structure of a unital algebra by setting $(\lambda,a)\cdot(\mu,b)\coloneqq(\lambda\mu,\lambda b + \mu a +ab)$ for $\lambda,\mu\in\RR$ and $a,b\in\aa$. Then $\FALGALGONE{\aa}$ exists and equals $\aaone$.
		\item Take a vector lattice algebra $\vla$.  Set $\vlaone\coloneqq \RR\oplus \vla$ as a vector lattice direct sum, and supply it with the structure of a unital associative algebra as above. Then $\vlaone$ is a unital vector lattice algebra. It is, however, not generally the free unital vector lattice algebra over $\vla$. The problem is that the natural factoring map $\fact{\varphi}$, although a unital algebra homomorphism, need not be a lattice homomorphism. It is nevertheless true that the unitisation $\FVLAVLAONE{\vla}$ of $\vla$ exists; see \cref{res:overview_algebraic}.		
		\item Take a non-empty set $S$. Contrary to the previous example, in this case there does not even appear to be a natural (flawed) Ansatz for the free unital vector lattice algebra $\FSETVLAONE{S}$ over $S$. The fact that it nevertheless exists (see \cref{res:overview_algebraic}) is the foundation on which the other existence results in this paper are built.
		\item Let $\MET$ be the category of metric spaces with continuous maps, and let $\COMMET$ be its full subcategory of complete metric spaces. Take a metric space $M$. Then $\FMETCOMMET{M}$ exists and is the metric completion of $M$.
		\item Let $\BA$ be the category of Banach algebras with the continuous algebra homomorphisms as morphisms. Consider a set $\{s\}$ with one element. Then $\FSETBA{\{s\}}$ does not exist. To prove this, suppose, to the contrary, that there exist a Banach algebra $\FSETBA{\{s\}}$ and a map $j:\{s\}\to\FSETBA{\{s\}}$ such that, for every Banach algebra $\ba$ and every map $\varphi: \{s\}\to \ba$, there exists a unique continuous algebra homomorphism $\fact{\varphi}$ such that the diagram
		\begin{equation*}
		\begin{tikzcd}
		\{s\}\arrow[r, "j"]\arrow[dr, "\varphi"]& \FSETBA{\{s\}}\arrow[d, "\fact{\varphi}"]
		\\ & \ba
		\end{tikzcd}
		\end{equation*}
		is commutative. For $\ba$, we take the Banach algebra of the real numbers and, for every $x>0$ in $\RR$, we define the map $\varphi_x:\{s\}\to\RR$ by setting $\varphi(s)\coloneqq x$. Take $x>0$. Then, for every $n\geq 0$, we have
		\begin{align*}
		x^n&=\norm{[\varphi_x(s)]^n}\\
		&= \norm{[\fact{\varphi}_x (j(s))]^n}\\
		&= \norm{\fact{\varphi}_x ([j(s)]^n)}\\
		&\leq \norm{\fact{\varphi}_x}\,\norm{[j(s)]^n}\\
		&\leq \norm{\fact{\varphi}_x}\,\norm{j(s)}^n.
		\end{align*}
		On letting $n$ tend to infinity, we see that we must have $\norm{j(s)}\geq x$. Since $x>0$ is arbitrary, this is impossible.
	\end{enumerate}
\end{examples}

We shall suppose for the remainder of this paper that the objects of categories are sets.

\begin{remark}\label{rem:embedding}
	Suppose, in the setting of \cref{def:free_object}, that $\FCATONECATTWO{O_1}$ exists and that, for each pair of different elements $x,y\in O_1$, there exists an object $O_2$ of $\cattwo$ and a morphism $\varphi: O_1\to O_2$ of $\catone$ such that $\varphi(x)\neq \varphi(y)$. Then the map $j:O_1\to\FCATONECATTWO{O_1}$ is clearly injective. The converse is obviously true because $\FCATONECATTWO{O_1}$ is an object of $\cattwo$ and the injective morphism $j$ of $\catone$ then separates the elements of $O_1$ all at once.  We shall often use this observation and collect a number of elementary facts in this vein, where actually one injective map $\varphi:O_1\to O_2$ already separates all elements of $O_1$.
\end{remark}	

\begin{lemma}\label{res:embeddings}\quad
	\begin{enumerate}
		\item Let $S$ be a non-empty set. There exists a vector space $V$ and an injective map $\varphi:S\to V$.\label{res:embedding_1}
		\item Let $V$ be a vector space. There exist a vector lattice $E$ and an injective linear map $\varphi:V\to E$.\label{res:embedding_2}
		\item Let $E$ be a vector lattice. There exist a commutative vector lattice algebra $\vlaonepos$ with a positive identity element and an injective vector lattice homomorphism $\varphi: E\to \vlaonepos$.\label{res:embedding_3}
		\item Let $\vla$ be a vector lattice algebra. There exist a unital vector lattice algebra $\vlaonepos$ with a positive identity element and an injective vector lattice algebra homomorphism $\varphi: \vla\to \vlaonepos$.\label{res:embedding_4}
	\end{enumerate}
\end{lemma}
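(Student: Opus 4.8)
The plan is to treat the four parts separately, producing in each case a concrete target object together with an explicit map; parts \partref{res:embedding_3} and \partref{res:embedding_4} will both come from the unitisation construction in the fourth item of \cref{ex:free_objects}. For \partref{res:embedding_1}, I would take $V$ to be the free vector space on $S$, that is, the space of finitely supported real-valued functions on $S$ (as in the second item of \cref{ex:free_objects}), and let $\varphi$ send each $s\in S$ to the function that equals $1$ at $s$ and $0$ elsewhere; distinct elements of $S$ yield distinct such functions, so $\varphi$ is injective. For \partref{res:embedding_2}, I would fix a Hamel basis $\{v_i\}_{i\in I}$ of $V$ and set $E\coloneqq\RR^I$ with the pointwise vector space operations and the pointwise order, which is a vector lattice. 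Writing each $v\in V$ uniquely as a finite sum $v=\sum_i\lambda_i v_i$, the assignment $\varphi(v)\coloneqq(\lambda_i)_{i\in I}$ is a linear map into $E$, and it is injective since the $\lambda_i$ are the coordinates of $v$ with respect to the basis.

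For \partref{res:embedding_3}, the idea is to equip the vector lattice $E$ with the zero multiplication $x\cdot y\coloneqq 0$. This is associative and commutative, and the product of two positive elements equals $0$ and is therefore positive, so $E$ becomes a commutative vector lattice algebra. By the fourth item of \cref{ex:free_objects}, its unitisation $\RR\oplus E$ (the vector lattice direct sum, with $(\lambda,x)\cdot(\mu,y)\coloneqq(\lambda\mu,\lambda y+\mu x)$ in view of the vanishing product) is a unital vector lattice algebra; it is commutative, and its identity element $(1,0)$ is positive because $1\geq 0$ in $\RR$ and $0\geq 0$ in $E$. Since the lattice operations on a vector lattice direct sum are computed coordinatewise, the map $x\mapsto(0,x)$ is a vector lattice homomorphism $E\to\RR\oplus E$, and it is clearly injective. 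For \partref{res:embedding_4} I would apply the same unitisation directly to the given vector lattice algebra $A$: by \cref{ex:free_objects} again, $\RR\oplus A$ with $(\lambda,a)\cdot(\mu,b)\coloneqq(\lambda\mu,\lambda b+\mu a+ab)$ is a unital vector lattice algebra with positive identity $(1,0)$, and the map $a\mapsto(0,a)$ is both a vector lattice homomorphism (again by the coordinatewise lattice operations) and an algebra homomorphism, since $(0,a)\cdot(0,b)=(0,ab)$; it is injective.

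I do not anticipate a genuine obstacle here: every verification is routine. The only points that need a moment's care are arranging commutativity in \partref{res:embedding_3} — which is exactly why the zero multiplication is used rather than an arbitrary one — and checking that the canonical embeddings into the direct-sum algebras preserve suprema and infima, but this is immediate from the coordinatewise description of the lattice structure on a vector lattice direct sum. The appeal to a Hamel basis in \partref{res:embedding_2} uses the axiom of choice, which is unproblematic in the present setting.
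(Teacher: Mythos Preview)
Your argument is correct. Parts~\partref{res:embedding_3} and~\partref{res:embedding_4} coincide exactly with the paper's proof: equip $E$ with the zero multiplication, then unitise via the vector lattice direct sum $\RR\oplus E$ (respectively $\RR\oplus A$) with the canonical inclusion. For parts~\partref{res:embedding_1} and~\partref{res:embedding_2} you choose slightly different concrete targets than the paper does. In~\partref{res:embedding_1} the paper takes for $V$ the space of \emph{all} real-valued functions on $S$ rather than only the finitely supported ones; the embedding is the same indicator-function map, so this is an immaterial variant. In~\partref{res:embedding_2} the paper avoids choosing a Hamel basis by letting $V^{\#}$ be the set of all linear functionals on $V$ and taking $E$ to be the vector lattice $\RR^{V^{\#}}$ with the canonical evaluation map $v\mapsto(f\mapsto f(v))$; injectivity then comes from the fact that $V^{\#}$ separates points. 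Your basis-coordinate construction is equally valid and equally elementary (and both routes ultimately appeal to the axiom of choice in the infinite-dimensional case), so there is no substantive difference in strength or difficulty---just a different choice of model.
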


The parts of \cref{res:embeddings} can be combined to see that, for example, there is always an injective linear map from a given vector space into a commutative vector lattice algebra with a positive identity element. A number of (combined) inclusions of categories $\catone\supset\cattwo$ from the chain \cref{eq:chain_of_categories} can thus be `reversed' in the sense that every object of $\catone$ embeds, via a morphism in $\catone$, into an object of $\cattwo$. Since it is not true that the identity of every unital vector lattice algebra is positive, there is no general `reversal' for the inclusion $\VLAONE\supset\VLAONEPOS$.

\begin{proof}[Proof of \cref{res:embeddings}]
	For part~\partref{res:embedding_1}, we take for $V$ the real-valued functions on $S$, together with the canonical map from $S$ into $V$.\\
	For part~\partref{res:embedding_2}, we let $V^{\#}$ denote set of all linear functionals on $V$. For $E$ we take the vector lattice of all real-valued functions on $V^{\#}$. The canonical map from $V$ into $E$ is linear and injective.
	
	For part~\partref{res:embedding_3}, we first make $E$ into a commutative vector lattice algebra by supplying it with the zero multiplication. Subsequently, we apply the usual unitisation procedure to that vector lattice algebra. All in all, we take the vector lattice direct sum $\vlaonepos\coloneqq\RR\oplus E$, supplied with the multiplication $(\lambda,x)\cdot(\mu,y)\coloneqq (\lambda\mu,\lambda y + \mu x)$ for $\lambda,\mu\in\RR$ and $x,y\in E$, together with the canonical map from $E$ into $\vlaonepos$.
	
	For part~\partref{res:embedding_4}, we take the vector lattice direct sum $\vlaonepos\coloneqq\RR\oplus A$, supplied with the usual multiplication to make it into a unital vector lattice algebra with a positive identity element, together with the canonical map from $\vla$ into $\vlaonepos$.
\end{proof}

\begin{remark}\label{rem:generating}
	In our situations of interest, the objects of the category $\cattwo$ in \cref{def:free_object} are sets with operations. This implies that $\FCATONECATTWO{O_1}$, if it exists, must be generated, in the sense of $\cattwo$, by $j(O_1)$. Take a non-empty set $S$, for example. If $\FSETVL{S}$ exists, then it must be generated, as a vector lattice, by its subset $j(S)$. The reason is simply that the vector sublattice that is generated by $j(S)$ and the restricted factoring map $\fact{\varphi}$ obviously also have the required universal property. The essential uniqueness of such a pair then implies that this vector sublattice must coincide with $\FSETVL{S}$. As another example, take a vector space $V$.  If $\FVSVLAONEPOS{V}$ exist, then it must be generated, as a unital vector lattice algebra, by (its identity element and) its subspace $j(V)$. We shall often use this observation.
\end{remark}

\begin{remark}\label{rem:composition}
	Let $\catone\supset\cattwo\supset\catthree$ be a chain of categories. Take an object $O_1$ of $\catone$, and suppose that $\FCATONECATTWO{O_1}$ exists in $\cattwo$, with accompanying map $j_{12}:O_1\to\FCATONECATTWO{O_1}$. Suppose that $\FCATTWOCATTHREE{\FCATONECATTWO{O_1}}$ exists in $\catthree$, with accompanying map $j_{23}:\FCATONECATTWO{O_1}\to\FCATTWOCATTHREE{\FCATONECATTWO{O_1}}$. It is easy to see that  $\FCATONECATTHREE{O_1}$ then also exists. In fact, one can take $\FCATONECATTHREE{O_1}\coloneqq\FCATTWOCATTHREE{\FCATONECATTWO{O_1}}$ and $j_{13}\coloneqq j_{23}\circ j_{12}$ as accompanying map $j_{13}:O_1\to \FCATONECATTHREE{O_1}$
\end{remark}

\begin{remark}
	Let $\catone$  be a category, and let $\cattwo$ be a subcategory. Suppose that $\FCATONECATTWO{O_1}$ exists for every object $O_1$ of $\catone$. Since $\FCATONECATTWO{O_1}$ is not uniquely determined, there is no natural functor that assigns `the' free object $\FCATONECATTWO{O_1}$ of $\cattwo$ to $O_1$. This can be remedied to some extent, as follows. Suppose that, for each object $O_1$ of $\catone$, a free object $\FCATONECATTWO{O_1}$ of $\cattwo$ over $O_1$ has been chosen, together with its accompanying map $j: O_1\to \FCATONECATTWO{O_1}$. Suppose that $O_1^\prime$ is an object of $\catone$, and that $\varphi:O_1\to O_1^\prime$ is a morphism in $\catone$. For the chosen free object $\FCATONECATTWO{O_1^\prime}$ and accompanying map $j^\prime: O_1^\prime\to \FCATONECATTWO{O_1^\prime}$, there exists a unique morphism $\fact{\varphi}:\FCATONECATTWO{O_1}\to\FCATONECATTWO{O_1^\prime}$ of $\cattwo$ such that $(j^\prime\circ\varphi)=\fact{\varphi}\circ j$. Then an actual functor from $\catone$ to $\cattwo$ is defined by sending an object $O_1$ to the chosen free object $\FCATONECATTWO{O_1}$ of $\cattwo$, and a morphism $\varphi:O_1\to O_1^\prime$ of $\catone$ to its associated morphism $\fact{\varphi}: \FCATONECATTWO{O_1}\to\FCATONECATTWO{O_1^\prime}$ of $\cattwo$.
\end{remark}

\begin{remark}\label{rem:terminology}
	There appears to be no general agreement about the terminology for the objects $\FCATONECATTWO{O_1}$ from \cref{def:free_object}. A different way to look at the pairs $(j,\FCATONECATTWO{O_1}$, leading to a different terminology, is as follows. Take an object $O_1$ of $\catone$, and consider the pairs $(\varphi, O_2)$, where $O_2$ is an object of $\cattwo$ and $\varphi:O_1\to O_2$ is a morphism in $\catone$. We form a new category that consists of all such pairs, and where a morphism from a pair $(\varphi,O_2)$ to a pair $(\varphi^\prime, O_2^\prime)$ is a morphism $\psi$ of $\cattwo$ such that the diagram
	\begin{equation*}
	\begin{tikzcd}
	O_1\arrow[r, "\varphi"]\arrow[dr, "\varphi^\prime"]& O_2\arrow[d, "\psi"]
	\\& O_2^\prime
	\end{tikzcd}
	\end{equation*}
	is commutative. The pairs $(j,\FCATONECATTWO{O_1})$ from \cref{def:free_object} are then precisely the universally repelling objects (also known as the initial objects) of this new category. From this viewpoint, it is natural to speak of a \emph{universal} object over $O_1$ of $\cattwo$. This term is used in several places in the literature; see \cite[p.83]{mac_lane:CATEGORIES_FOR_THE_WORKING_MATHEMAICIAN:1998} or \cite[p.153]{aliprantis_langford_1984}, for example. In the terminology of  \cite[p.179]{herrlich_strecker_CATEGORY_THEORY_THIRD_EDITION:2007}, $(j,\FCATONECATTWO{O_1})$ is called a \emph{$\cattwo$-reflection of $O_1$}. In the terminology of \cite[Definition~2.10]{helemskii:2013}, $\FCATONECATTWO{O_1}$ is a \emph{free object with base $O_1$}. The terminology in \cite[Definition~8.22]{adamek_herrlich_strecker_ABSTRACT_AND_CONCRETE_CATEGORIES_THE_JOY_OF_CATS:2006} agrees with ours. The same is true for the overview paper by Pestov \cite{pestov:1993} and many titles in its biography; see \cite{pestov:1993}. It is clear from this source \cite{pestov:1993} that, in a topological or analytical context, `free' is the prevailing term. Since an analytical context is, in the end, our main motivation for the present work, and since it is also used in the papers \cite{aviles_rodriguez_tradacete:2018, aviles_rodriguez-abellan:2019,aviles_tradacete_villanueva:2019,bleier:1973,de_pagter_wickstead:2015} that are directly related to the present paper, we have chosen to adapt this too.	
	
\end{remark}

\section{Universal algebra: part I}\label{sec:universal_algebra_part_I}

\noindent In this section, we review the first part of the material from universal algebra that we need. It is largely based on the exposition in \cite{bergman_UNIVERSAL_ALGEBRA:2012}. Our treatment is slightly different in the sense that we prefer to speak of constants instead of 0-ary operations, and that we have singled them out in definitions. There is then no longer any need for conventions to be in force when a definition `degenerates' for an `operation' that does not have variables at all.  We also speak of an `abstract algebra' rather than of an `algebra', since we want to keep our convention in force that the latter term refers to an associative algebra over the real numbers. Since both notions do actually occur in one context, it seems unavoidable to make such a distinction.

\begin{definition}
	Suppose that $\mathcal F$ is a non-empty (possibly infinite) set, and that $\rho:\mathcal F\to\NN_0$ is a map. Then the pair $\type$ is called a \emph{type}. Let $\absalg$ be a non-empty set and suppose that, for each $f\in \mathcal F$, the following is given:
	\begin{enumerate}
		\item when $\rho(f)=0$: an element $f^\absalg$ of $\absalg$;
		\item when $\rho(f)\geq 1$: a map $f^\absalg:\absalg^{\rho(f)}\to\absalg$.
	\end{enumerate}
	We set $\mathcal F^\absalg\coloneqq\{\,f^\absalg : f\in \mathcal F\,\}$. The pair $\langle\absalg,\mathcal F^\absalg\rangle$ is then called an \emph{abstract algebra of type $(\mathcal F,\rho$)}. The elements of $\mathcal F$ are called \emph{operation symbols}. The elements $f^\absalg$ of $\absalg$ for those $f\in\mathcal F$ such that $\rho(f)=0$ are called the \emph{constants of $\absalg$}, and the $\rho(f)$-ary maps $f^\absalg:\absalg^{\rho(f)}\to\absalg$ for those $f$ such that $\rho(f)\geq 1$ are called the \emph{operations on $\absalg$}.
\end{definition}
When everything else is clear from the context, we shall also simply refer to $\absalg$ as an abstract algebra, the rest being tacitly understood.

Suppose $\absalgtwo$ is a non-empty subset $\absalg$ that contains the constants of $\absalg$ and such that
$f^A(B^{\rho(f)})\subseteq B$ for all $f\in\mathcal F$ such that $\rho(f)\geq 1$. Supplied with the constants of $\absalg$ and the restricted operations on $\absalg$, $B$ is then called an \emph{abstract subalgebra} of $\absalg$. It is of the same type $\type$ as $\absalg$.

Let $\type$ be a type. Suppose that $I$ is a non-empty index set and that, for each $i\in I$, $\absalg_i$ is an abstract algebra of type $\type$. Then the product $\prod_{i\in I} A_i$ becomes an abstract algebra of type $\type$ in the obvious coordinate-wise way; it is then called the \emph{abstract product algebra of the $\absalg_i$}.

\begin{example}\label{ex:first_group_example}
	Take $\mathcal F =\{f_0,f_1,f_2\}$ for some symbols $f_0$, $f_1$, and $f_2$. Set $\rho(f_0)\coloneqq 0$, $\rho(f_1)\coloneqq 1$, and $\rho(f_2)\coloneqq 2$.
	Let $G$ be a group.
	
	\begin{enumerate}
		\item Set $f_0^G\coloneqq e$, where $e$ is the identity element of $G$; set $f_1^G(x)=x^{-1}$ for $x\in G$; and set $f_2^G(x,y)\coloneqq xy$ for $x,y\in G$. Then $\langle G, \{f_0^G,f_1^G,f_2^G \}\rangle$ is an abstract algebra of type $\type$.
		\item Take an element $x_0$ of $G$. Set $\tilde f_0^G\coloneqq x_0$; set $\tilde f_1^G(x)=x^7$ for $x\in G$; and set $\tilde f_2^G(x,y)\coloneqq x^2 y x^{-1} y^3 x^2$ for $x,y\in G$. Then $\langle G,\{\tilde f_0^G,\tilde f_1^G, \tilde f_2^G\}\rangle$ is an abstract algebra of type $\type$.
	\end{enumerate}
\end{example}

The notation as used in part (1) of \cref{ex:first_group_example} is not very suggestive. Given a group $G$, it would be more natural to simply speak of the associated abstract algebra $\langle G,\{e,{\,}^{-1}\,,\,\cdot\,\}\rangle$, where the type $\type$ with an underlying set $\mathcal F$ of cardinality 3, and the map $\rho: \mathcal F\to\{e,{\,}^{-1}\,,\,\cdot\,\}$ (the set containing the constant and the two actual operations on $G$) understood to be evident from the context. Given two groups $G_1$ and $G_2$, it would then, strictly speaking, be necessary to write $\langle G_1,\{e^{G_1},{{\,}^{-1}}^{G_1}\,,\,\cdot^{G_1}\,\}\rangle$ and $\langle G_2,\{e^{G_2},{{\,}^{-1}}^{G_2}\,,\,\cdot^{G_2}\,\}\rangle$. When working with concrete examples we shall omit these superscripts. For example, let $V$ be a vector space. Then there is a naturally associated abstract algebra $\langle V,\,\{0,\,+,\,\mathrm{ADDINV},\,\{\,m_\lambda:\lambda\in\RR\,\}    \}\rangle$. The unspecified set $\mathcal F$ is now uncountable, and to its elements correspond a constant $0$ of $V$, an obvious binary operation $+$, a unary operation $\mathrm{ADDINV}$ that sends $x\in V$ to $-x$, and, for every $\lambda\in \RR$, a unary operation $m_\lambda$ that sends $x\in V$ to $\lambda x$. It is then also clear what the function $\rho:\mathcal F\to\NN_0$ is; it takes the values 0,1, and 2. When $W$ is another vector space, we denote its associated abstract algebra by $\langle W,\,\{0,\,+,\,\mathrm{ADDINV},\,\{\,m_\lambda:\lambda\in\RR\,\}\}\rangle$.

Not every abstract algebra $\langle V,\,\{0,\,+,\,\mathrm{ADDINV},\, \{\,m_\lambda:\lambda\in\RR\,\}    \}\rangle$, with a constant $0$, a binary operation +, a unary operation $\mathrm{ADDINV}$, and unary operations $m_\lambda$ for $\lambda\in\RR$ becomes a vector space when one attempts to introduce the vector space operations in the obvious way. For this, certain relations between the constants and the operations have to hold, such as $m_{\lambda_1\lambda_2}(x)=m_{\lambda_1}(m_{\lambda_2}(x))$ for $\lambda_1,\lambda_2\in\RR$, and $x\in\absalg$, and $x+(\mathrm{ADDINV}(x))=0$ for all $x\in \absalg$. This need not always be the case. In \cref{res:identities_forced}, below, it will become clear how  one can always pass to an abstract quotient algebra (to be defined below) of $\langle V,\,\{0,\,+,\,\mathrm{ADDINV},\,\{\,m_\lambda:\lambda\in\RR\,\}\}\rangle$ that \emph{is} a vector space.

\begin{definition}
	Let $\langle\absalg,\mathcal F^\absalg\rangle$ and $\langle\absalgtwo,\mathcal F^\absalgtwo\rangle$ be abstract algebras of the same type $\type$. Suppose that $h:\absalg\to\absalgtwo$ is a map. Then $h$ is an \emph{abstract algebra homomorphism} when the following are both satisfied:
	\begin{enumerate}
		\item $h(f^\absalg)=f^\absalgtwo$ for all $f\in \mathcal F$ such that $\rho(f)=0$;
		\item $h\big (f^\absalg(a_1,\dotsc,a_{\rho(f)})\big )=f^\absalgtwo\big (h(a_1),\dotsc,h(a_{\rho(f)})\big)$ for all $f\in\mathcal F$ such that $\rho(f)\geq 1$.
	\end{enumerate}
\end{definition}

The inclusion map from an abstract subalgebra to the abstract super-algebra is an abstract algebra homomorphism. The projections from an abstract product algebra to its factors are abstract algebra homomorphisms.

\begin{example}\label{ex:second_group_example}\quad
	\begin{enumerate}
		\item Let $G_1$ and $G_2$ be groups. The abstract algebra homomorphism $h:\langle G_1,\{e,{\,}^{-1}\,,\,\cdot\,\}\rangle\to\langle G_2,\{e,{\,}^{-1}\,,\,\cdot\,\}\rangle$ are maps between the underlying sets that are unital, preserve the inverse of one element, and preserve the product of two elements. Since $G_1$ and $G_2$ are actually groups, this is equivalent to preserving the product of two elements. Thus the abstract algebra homomorphisms between the associated abstract algebras are in a natural bijection with the group homomorphisms between the groups in the usual meaning of the word.
		\item Let $G_1$ and $G_2$ be groups. For $G_2$, take operations as in the second part of \cref{ex:first_group_example}. In that case, the abstract algebra homomorphisms $h:\langle G_1,\{e,{\,}^{-1}\,,\,\cdot\,\}\rangle\to\langle G_2,\{{\tilde f}_1^{G_2},{\tilde f}_2^{G_2},{\tilde f}_3^{G_2} \}\rangle$ are the maps $h:G_1\to G_2$ such that $h(e)=x_0$, $h(x^{-1})=x^7$ for $x\in G_1$, and $h(xy)=	x^2 y x^{-1} y^3 x^2$ for $x,y\in G_1$. Besides not being obviously natural or useful, it may well be the case that, for certain combinations of $G_1$, $G_2$, and $x_0$, such abstract algebra homomorphisms $h$ do not exist.
		\item Let $V$ and $W$ be vector spaces, and let  $\langle V,\,\{0^V,\,+^V,\,\mathrm{ADDINV}^V,  \, \{\,m_\lambda^V:\lambda\in\RR\,\} \}\rangle$ and $\langle W,\,\{0^W,\,+^V,\,\mathrm{ADDINV}^W, \, \{\,m_\lambda^W:\lambda\in\RR\,\} \}\rangle$ denote the naturally associated abstract algebras of the same unspecified type $\type$. Then the abstract algebra homomorphisms between the associated abstract algebras are in a natural bijection with the linear maps between the vector spaces.
	\end{enumerate}	
\end{example}

\begin{definition}
	Let $\absalg$ and $\absalgtwo$ be abstract algebras, and let $h:\absalg\to\absalgtwo$ be a map. Then the \emph{kernel of h}, denoted by $\ker h$, is defined as
	\[
	\ker h\coloneqq \left\{(x,y)\in \absalg^2: h(x)=h(y)\,\right\}.
	\]
\end{definition}

Note that $\ker h$ is not a subset of $\absalg$. In many practical contexts, however, it can be described in terms of a subset of $\absalg$ that will then be called the kernel of $h$ in the pertinent context. For example, let $G_1$ and $G_2$ be groups, and let  $h:\langle G_1,\{e,{\,}^{-1}\,,\,\cdot\,\}\rangle\to\langle G_2,\{e,{\,}^{-1}\,,\,\cdot\,\}\rangle$ be an abstract algebra homomorphism. Set $N\coloneqq \{\,x\in G_1: h(x)=e\,\}$. Then $\ker h=\{\,(x,y)\in G_1^2 : xy^{-1}\in N\,\}$. As another example, let $V$ and $W$ be vector spaces, and let
$h:V\to W$ be an abstract algebra homomorphism between the two associated abstract algebras. Set $L\coloneqq\{\,x\in V: h(x)=0\,\}$. Then $\ker h = \{\,(x,y)\in V^2 : x-y\in L\,\}$.

When $\theta\subseteq \absalg^2$ is a binary relation on $\absalg$, and $x,y\in \absalg$, then we shall write $x\,\theta\,y$ for $(x,y)\in\theta$. The kernels of abstract algebra homomorphisms turn out to be precisely the binary relations on $\absalg$ that we shall now define.

\begin{definition}
	Let $\langle\absalg,\mathcal F^\absalg\rangle$ be an abstract algebra of type $\type$. Then a binary relation $\theta\subseteq \absalg^2$ on $\absalg$ is called a \emph{congruence relation on $\absalg$} when the following are both satisfied:
	\begin{enumerate}
		\item $\theta$ is an equivalence relation on $\absalg$;
		\item when $f\in\mathcal F$ is such that $\rho(f)\geq 1$, then
		\[
		f^\absalg(x_1,\dotsc,x_{\rho(f)})\,\,\theta\,\,f^\absalg(y_1,\dotsc,y_{\rho(f)})
		\]
		whenever $x_1,\dotsc,x_{\rho(f)}\in \absalg$ and $y_1,\dotsc,y_{\rho(f)}\in \absalg$ are such that $x_i\,\theta\,y_i$ for $i=1,\dotsc,\rho(f)$.		
	\end{enumerate}
\end{definition}

It is clear that $\absalg^2$ is the largest congruence relation on $\absalg$, and that $\{\,(x,x): x\in\absalg\,\}$ is the smallest. The intersection of an arbitrary non-empty collection of congruence relations on $\absalg$ is again a congruence relation on $\absalg$. Suppose that $S\subseteq\absalg^2$ is an arbitrary subset. The intersection of all congruence relations on $\absalg$ that contain $S$ is the smallest congruence relation on
$\absalg$ that contains $S$; it is called the \emph{congruence relation on $\absalg$ that is generated by $S$}.

It is immediate from the definitions that the kernel of an abstract algebra homomorphism between abstract algebras of the same type is a congruence relation on the domain. All congruence relations on an abstract algebra occur in this fashion, as will become clear from the following construction of abstract quotient algebras.

Let $\absalg$ be an abstract algebra of type $\langle\mathcal F,\rho\rangle$. Suppose that $\theta$ is a congruence relation on $\absalg$. Let $\absalg/\theta$ denote the set of equivalence classes in $\absalg$ with respect to $\theta$, and let $q_\theta: \absalg\to\absalg/\theta$ denote the canonical map. When $f\in \mathcal F$ is such that $\rho(f)=0$, we set
\[
f^{\absalg/\theta}\coloneqq q_\theta(f^\absalg).
\]
When $f\in \mathcal F$ is such that $\rho(f)\geq 1$, then, for $x_1,\dotsc,x_{\rho(f)}\in\absalg$, we set
\[
f^{\absalg/\theta}\big(q_\theta(x_1),\dotsc,q_\theta(x_{\rho(f)})\big)\coloneqq q_\theta\big(f^\absalg(x_1,\dotsc,x_{\rho(f)})\big).
\]
Since $\theta$ is a congruence relation on $\absalg$, the maps $f^{\absalg/\theta}$ are well defined. Thus $\langle\absalg/\theta,\desset{f^{\absalg/\theta}:f\in \mathcal F}\rangle$ is an abstract algebra of type $\type$. By its construction, the map $q_\theta:\absalg\to\absalg/\theta$ is an abstract algebra homomorphism, and $\ker q_\theta=\theta$.

The following result, which is \cite[Exercise~1.26.8]{bergman_UNIVERSAL_ALGEBRA:2012}, is an immediate consequence of the definitions.

\begin{lemma}\label{res:factoring_a_homomorphism}
	Let $\absalg$ and $\absalgtwo$ be abstract algebras of the same type, and let $h:A\to B$ be an abstract algebra homomorphism. Suppose that $\theta$ is a congruence relation on $\absalg$ such that $\theta\subseteqq\ker h$. Then there exists a unique map $\fact{h}:\absalg/\theta\to\absalgtwo$ such that $h=\fact{h}\circ q_\theta$, and this map $\fact{h}$ is an abstract algebra homomorphism.
\end{lemma}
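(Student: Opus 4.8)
The statement is the classical factoring lemma for abstract algebra homomorphisms. I would prove it in two stages: first construct the map $\fact{h}$ on the set level and verify it is well defined, then check it is an abstract algebra homomorphism. For the set-level construction, given an equivalence class $c\in\absalg/\theta$, pick any representative $x\in\absalg$ with $q_\theta(x)=c$ and set $\fact{h}(c)\coloneqq h(x)$. Well-definedness is the first thing to check: if $x,x^\prime$ are two representatives, then $x\,\theta\,x^\prime$, hence $(x,x^\prime)\in\theta\subseteqq\ker h$, so $h(x)=h(x^\prime)$. Thus $\fact h$ is a well-defined map $\absalg/\theta\to\absalgtwo$, and by construction $\fact h(q_\theta(x))=h(x)$ for all $x\in\absalg$, i.e. $h=\fact h\circ q_\theta$. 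Uniqueness is immediate: $q_\theta$ is surjective, so any map $g$ with $g\circ q_\theta=h$ must satisfy $g(c)=g(q_\theta(x))=h(x)=\fact h(c)$ for each $c=q_\theta(x)$.

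Next I would verify that $\fact h$ respects the algebra structure. For a constant symbol $f\in\mathcal F$ with $\rho(f)=0$, recall $f^{\absalg/\theta}=q_\theta(f^\absalg)$ by the construction of the quotient algebra, so $\fact h(f^{\absalg/\theta})=\fact h(q_\theta(f^\absalg))=h(f^\absalg)=f^\absalgtwo$, using that $h$ is an abstract algebra homomorphism. For an operation symbol $f$ with $\rho(f)\geq 1$ and classes $q_\theta(x_1),\dotsc,q_\theta(x_{\rho(f)})$, I compute
\[
\fact h\big(f^{\absalg/\theta}(q_\theta(x_1),\dotsc,q_\theta(x_{\rho(f)}))\big)
=\fact h\big(q_\theta(f^\absalg(x_1,\dotsc,x_{\rho(f)}))\big)
=h\big(f^\absalg(x_1,\dotsc,x_{\rho(f)})\big),
\]
again by the definition of $f^{\absalg/\theta}$ and then of $\fact h$. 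Since $h$ is a homomorphism, this equals $f^\absalgtwo\big(h(x_1),\dotsc,h(x_{\rho(f)})\big)=f^\absalgtwo\big(\fact h(q_\theta(x_1)),\dotsc,\fact h(q_\theta(x_{\rho(f)}))\big)$, which is exactly the homomorphism condition for $\fact h$.

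**Main obstacle.** There is no real obstacle here; every step is a direct unwinding of the definitions of congruence relation, quotient algebra, kernel, and abstract algebra homomorphism. The only point requiring the hypothesis $\theta\subseteqq\ker h$ is the well-definedness of $\fact h$ at the set level, and the only point requiring $\theta$ to be a congruence (rather than an arbitrary equivalence relation) is that the quotient $\absalg/\theta$ carries a well-defined algebra structure at all — which is already established in the excerpt preceding the lemma, so I may simply invoke it. Accordingly I would write the proof tersely, essentially as the computation above, and note that uniqueness follows from surjectivity of $q_\theta$.
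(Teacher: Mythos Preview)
Your proposal is correct and is precisely the standard unwinding of the definitions; the paper itself does not give a proof but simply states that the result ``is an immediate consequence of the definitions'' and cites it as an exercise in Bergman's book. Your argument is exactly what that remark stands for, so there is nothing to add.
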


We now come to the main point of this section. Let $\type$ be a type. The abstract algebras of type $\type$, together with the abstract algebra homomorphisms between them, form a subcategory $\ALGRHO$ of $\SET$. Take a non-empty set $S$. Does there exists a free abstract algebra of type $\type$ over $S$? The answer is affirmative, and we shall now construct such an object $\FSETALGRHO{S}$ of $\ALGRHO$. It will be denoted by $\termalgS$.

The idea is quite easy. One starts by defining a set $\termalgS$ of words that reflect the concept of applying maps (symbolised by the elements of $\mathcal F$) to their appropriate numbers of variables (as prescribed by $\rho$), and keep repeating combining the outcomes to get new maps of an ever increasing degree of complexity (measured by what will be called the `height', below). The symbols in these words that reflect the concept of variables are taken from $S$. This set of words is then made into an abstract algebra of type $(\mathcal F, \rho)$ in a natural way, with concatenation reflecting the concept of combining outcomes of operations as input for another operation. Furthermore, when $\absalg$ is any abstract algebra of type $\type$, and $h:S\to \absalg$ is any map, then there is a natural abstract algebra homomorphism $\fact{h}$ from $\termalgS$ into $\absalg$ that extends $h$. This map $\fact{h}$ simply replaces each symbol $f$ from $\mathcal F$ in a word in $\termalgS$ by the concrete operation (or constant) $f^\absalg$ in the context of $\absalg$, and replaces each symbol $s$ from $S$ by the concrete element $h(s)$ of $\absalg$. All in all, this map $\fact{h}$ takes in a word from $\termalgS$ and then applies the `actual map that the word stands for in the context of $\absalg$' to values of its arguments that are the pertinent given elements $h(s)$ of $\absalg$.

The details are as follows; they are taken from \cite[p.95-96]{bergman_UNIVERSAL_ALGEBRA:2012} (where the case where  $S=\emptyset$ but $\{\,f\in\mathcal F:\rho(f)=0\,\}\neq\emptyset$ is also included). The structure of the proof of \cref{res:term_algebra_is_free}, which we include for the convenience of the reader, is also taken from that source; cf.~\cite[Theorem~ 4.32]{bergman_UNIVERSAL_ALGEBRA:2012}.

\begin{definition}\label{def:termalg}
	Let $\type$ be a type. Let $S$ be a non-empty (possibly infinite) set that is disjoint from $\mathcal F$. We recursively define a set of words in symbols $f$ for $f\in\mathcal F$ and symbols $s$ for $s\in S$, as follows. We set
	\[
	T_0(S)\coloneqq\{\,s: s\in S\,\}\cup\{f\in\mathcal F: \rho(f)=0\}\}
	\]
	and, for $n=1,2,\dotsc$, we set
	\[
	T_{n+1}(S)\coloneqq T_n(S)\cup\{\, f t_1\ldots t_{\rho(f)} : f\in\mathcal F, \,\rho(f)\geq 1,\, t_{{\phantom(}\!\!1},\dotsc,t_{\rho(f)}\in T_n(S)\,\}.
	\]
	We define $\termalgS\coloneqq\bigcup_{n\geq 0}T_n(S)$, and refer to elements of $\termalgS$ as \emph{terms of type $\type$ over $S$}. For a term $t\in\termalgS$, the smallest $n$ such that $t\in T_n(S)$ is called the \emph{height} of $t$.
\end{definition}

The terms of height zero, i.e., the elements of $T_0(S)$, can come in two kinds. Since $S$ is non-empty, there are always terms in $T_0(S)$ that consist of a single symbol $s$ from $S$. These can be thought of as `variables'. The other words in $T_0(S)$ are the symbols $f$ from $\mathcal F$ such that $\rho(f)=0$. There need not be any such $f$, but when there are, then the corresponding terms can be thought of as `constants' or, perhaps even better with an eye towards applications, as `distinguished elements'.

\begin{example}
	By way of (a rather finite) example, we consider the case where $\mathcal F=\{f_0,f_1,f_2\}$, $\rho(f_0)=0$, $\rho(f_1)=1$, $\rho(f_2)=2$, and where $S=\{x,y,z\}$. Then the set $T_0(S)$ of terms of height 0 consists of the `variables' $x$, $y$, and $z$, together with the `constant' $f_0$. The set $T_1(S)$ consists of all terms in $T_0(S)$ (all of height 0); the terms (all of height 1) $f_1 z$, $f_1 y$, $f_1 z$, and $f_1f_0$; and $4\cdot 4=16$ terms  (all of height 1) of the form $f_2\xi_1 \xi_2$, where each of $\xi_1,\xi_2$ can be taken from $T_0(S)=\{x,y,z,f_0\}$. The term $f_2xy$ of height 1, secretly translated into $f_2(x,y)$, reflects the concept of applying an operation that depends on two variables. The term $f_2xf_0$, translated into $f_2(x,f_0)$, reflects the concept of applying a map that depends on two variables with the second one fixed at the value $f_0$. The subset $T_2(S)$ of $\termalgS$ contains terms such as $f_2f_2 f_0yf_1 z$ and $f_2 f_1 x f_2 yx$. After translating these terms of height 2 into their more readable forms $f_2(f_2(f_0,y),f_1(z))$ and $f_2(f_1(x),f_2(y,x))$, respectively, it becomes clear which concepts they reflect.
\end{example}

We shall now supply the set $\termalgS$ with the structure of an abstract algebra of type $\type$. If $f\in \mathcal F$ is such that $\rho(f)=0$, then we set
\begin{equation}\label{eq:term_algebra_constants}
f^\termalgS\coloneqq f,
\end{equation}
and when $f\in\mathcal F$ is such that $\rho(f)\geq 1$, then we use concatenation of words to set
\begin{equation}\label{eq:term_algebra_other_maps}
f^\termalgS (t_1,\dotsc,t_{\rho(f)})\coloneqq f t_1\ldots t_{\rho(f)}
\end{equation}
for all $t_1,\ldots,t_{\rho(f)}\in\termalgS$.

\begin{theorem}\label{res:term_algebra_is_free}
	Let $\type$ be a type. For every abstract algebra $\absalg$ of type $\type$ and every map $h:S\to \absalg$, there is a unique abstract algebra homomorphism $\fact{h}:\termalgS\to\absalg$ such that $\fact{h}(s)=h(s)$ for all $s\in S$:
	\begin{equation*}
	\begin{tikzcd}
	S\arrow[Subseteq]{r}{}\arrow[dr, "h"]& \termalgS\arrow[d, "\fact{h}"]
	\\& A
	\end{tikzcd}
	\end{equation*}
	That is, $\FSETALGRHO{S}$ exists and is equal to $\termalgS$; the accompanying inclusion map $j$ is injective.
	
\end{theorem}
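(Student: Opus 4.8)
The plan is to prove existence and uniqueness of $\fact h$ by induction on the height of terms, and then to derive the freeness statement by unwinding the definitions.

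\emph{Construction and uniqueness of $\fact h$.} First I would observe that any abstract algebra homomorphism $\fact h:\termalgS\to\absalg$ extending $h$ is forced by a height induction: on $T_0(S)$ we must have $\fact h(s)=h(s)$ for $s\in S$ and $\fact h(f)=\fact h(f^{\termalgS})=f^{\absalg}$ for the constant symbols $f$ with $\rho(f)=0$, using \cref{eq:term_algebra_constants}; and if $\fact h$ has been pinned down on $T_n(S)$, then for a term $f t_1\ldots t_{\rho(f)}$ of height $n+1$ we are forced to set
\[
\fact h(f t_1\ldots t_{\rho(f)})=\fact h\big(f^{\termalgS}(t_1,\dotsc,t_{\rho(f)})\big)=f^{\absalg}\big(\fact h(t_1),\dotsc,\fact h(t_{\rho(f)})\big),
\]
using \cref{eq:term_algebra_other_maps} and the homomorphism property. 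Conversely, these same formulas \emph{define} a map $\fact h$ by recursion on height; one should note here that the recursion is well posed because each term of positive height has a unique decomposition as $f t_1\ldots t_{\rho(f)}$ with $f\in\mathcal F$, $\rho(f)\ge 1$, and $t_i\in\termalgS$ — this unique readability is the point that needs care. Granting that, the map $\fact h$ so defined is by construction an abstract algebra homomorphism: it respects the constants by the base case, and it respects every operation $f^{\termalgS}$ with $\rho(f)\ge1$ by the recursive clause together with \cref{eq:term_algebra_other_maps}. This establishes both existence and uniqueness of $\fact h$, and the commutativity of the displayed triangle is immediate since $\fact h(s)=h(s)$ for $s\in S$.

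\emph{From this to freeness.} It remains to match \cref{def:free_object} with $\catone=\SET$ and $\cattwo=\ALGRHO$, taking $O_1=S$ and $j:S\hookrightarrow\termalgS$ the inclusion (which is a morphism in $\SET$). Given an object $\absalg$ of $\ALGRHO$ and a morphism $\varphi:S\to\absalg$ in $\SET$ — that is, just a map — the map $\fact\varphi$ produced above is the required unique $\ALGRHO$-morphism with $\fact\varphi\circ j=\varphi$. Finally, $j$ is injective simply because $S\subseteq\termalgS$ by construction (and $S$ is disjoint from $\mathcal F$, so the single-symbol terms coming from $S$ are genuinely distinct elements); alternatively one can invoke \cref{res:embeddings}\partref{res:embedding_1} together with \cref{rem:embedding}. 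This yields the assertion that $\FSETALGRHO{S}$ exists and equals $\termalgS$.

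\emph{Expected main obstacle.} The only genuinely non-formal point is unique readability of terms: one must check that a word in $\termalgS$ of positive height determines uniquely the leading symbol $f$ and the list $(t_1,\dotsc,t_{\rho(f)})$ of immediate subterms, so that the recursive definition of $\fact h$ does not depend on a choice. This follows from a standard argument: by induction on height, no proper prefix of a term is itself a term (a ``no term is a proper prefix of another term'' lemma), from which the decomposition $f t_1\ldots t_{\rho(f)}$ is forced. In the exposition I would either prove this prefix lemma explicitly or cite \cite[p.95--96]{bergman_UNIVERSAL_ALGEBRA:2012}; everything else is a routine induction on height.
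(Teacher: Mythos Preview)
Your proposal is correct and follows essentially the same approach as the paper: a simultaneous construction and uniqueness proof by induction on the height of terms, followed by the observation that the inclusion $S\subseteq\termalgS$ gives injectivity of $j$. If anything, you are more explicit than the paper about the unique-readability point, which the paper simply asserts (``for a unique $f\in\mathcal F$ \ldots and unique terms $t_1,\dotsc,t_{\rho(f)}$'') without further comment.
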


\begin{proof}
	The construction of the map $\fact{h}$ and the proof of its uniqueness can be given simultaneously, using induction on the height of a term.
	
	Take a term $t\in\termalgS$ of height 0. If $t$ is a word that consists of a single symbol from $S$, then $\fact{h}(s)$ is prescribed, and we define $\fact{h}(s)\coloneqq h(s)$ accordingly. If $t$ is a symbol $f$ from $\mathcal F$ such that $\rho(f)=0$, then, since $\fact{h}$ is supposed to be an abstract algebra homomorphism, \cref{eq:term_algebra_constants} implies that we must have $\fact{h}(f)=h(f^\termalgS)=f^\absalg$. Hence we define $\fact{h}(f)\coloneqq f^\absalg$ accordingly.

	Suppose that, for some $n\geq 0$, the uniqueness of $\fact{h}(t)$ has already been shown for all terms $t\in\termalgS$ of height at most $n$, and that $\fact{h}(t)$ has already been defined accordingly for such $t$. Take a term $t$ of height $n+1$. Then $t=f t_1\dotsc t_{\rho(f)}$ for a unique $f\in\mathcal F$ such that $\rho(f)\geq 1$ and unique terms $t_1,\dotsc,t_{\rho(f)}\in\termalgS$ of height at most $n$. Since $\fact{h}$ is supposed to be an abstract algebra homomorphism, \cref{eq:term_algebra_other_maps} implies that we must have
	\begin{align*}
	\fact{h}(t)&=\fact{h}(f t_1\dotsc t_{\rho(f)})\\
	&=\fact{h}(f^\termalgS(t_1,\dotsc,t_{\rho(f)}))\\
	&=f^\absalg(\fact{h}(t_1),\dotsc,\fact{h}(t_{\rho(f)})).
	\end{align*}
	As a consequence of the induction hypotheses, this shows that $\fact{h}(t)$ is also uniquely determined. Since, also as a consequence of the induction hypothesis, $\fact{h}(t_1),\ldots,\fact{h}(t_{\rho(f)})$ have already been defined, we can now define
	\[
	\fact{h}(t)\coloneqq f^\absalg(\fact{h}(t_1),\dotsc,\fact{h}(t_{\rho(f)}))
	\]
	accordingly.
	This completes the induction step.
	
	We have now shown that $\fact{h}$ is uniquely determined as well as explicitly constructed the only possible candidate. It is immediate from this construction and the definitions in  \cref{eq:term_algebra_constants,eq:term_algebra_other_maps} that this candidate is indeed an abstract algebra homomorphism.
	
	The final sentence of the statement is then clear.
\end{proof}

\begin{remark}\label{rem:term_algebra_is_generated_by_S}
	It is evident from its construction that $\termalgS$ is generated by $S$, in the sense that it equals its smallest abstract subalgebra that contains $S$. Of course, \cref{rem:generating} also makes clear that this must be the case.
\end{remark}

\section{Various lattices as abstract algebras satisfying identities}\label{sec:various_lattices_as_abstract_algebras_satisfying_identities}

\noindent Before proceeding with the general theory from universal algebra that we need, we pause to discuss structures that involve a partial ordering.

It is clear that, for a vector space, the validity of the vector space axioms can be expressed in terms of identities that involve the constant 0 and the operations of the naturally associated abstract algebra. For a unital vector lattice algebra, for example, it is, however, far less clear that there is an associated abstract algebra for which is possible. After all, the axioms of a unital vector lattice algebra also involve \emph{in}equalities and the assumption of the existence of the infimum and supremum of two elements. At first sight, it may seem counterintuitive that these can also be described in terms of constants, operations, and identities. Nevertheless, this is possible. As we shall see, it is precisely this fact that lies at the heart of the existence proof for the free unital vector lattice algebra over a non-empty set. Later on, we shall then use this one existence result to obtain the existence of all other fourteen free objects in \cref{res:overview_algebraic}, below.

We start with the classical observation that the partial ordering in a lattice can equivalently be formulated in terms of operations and identities; see \cite[Definition~1.7 and p.23]{bergman_UNIVERSAL_ALGEBRA:2012}, for example. Since this is crucial, and since we strive to keep this paper self-contained, we include the details for this. The ad-hoc terminology in the following definition is ours. We refrain from claiming any other originality here.

\begin{definition}\label{def:two_types_of_latices}
	Let $S$ be a non-empty set.
	\begin{enumerate}
		\item Suppose that $\leq$ is a partial ordering on $S$. Then the partially ordered set $(S,\leq)$ is a \emph{partially ordered lattice} if, for all $x,y\in S$, the supremum $x\vee y$ and the infimum $x\wedge y$ exist in $S$.
		\item Suppose that $S$ is supplied with binary operations $\alwedge$ and $\alvee$. Then the abstract algebra $(S,\alwedge,\alvee)$ is an \emph{algebraic lattice} if, for all $x,y,z\in S$,
		\begin{align*}
		\quad\quad\quad&x\alwedge\left(y\alwedge z\right)=\left(x \alwedge y\right)\alwedge z, & &x\alvee\left(y\alvee z\right)=\left(x \alvee y\right)\alvee z,\\
		&x\alwedge x = x, & &x\alvee x = x,\\
		&x\alwedge y=y\alwedge x, & & x\alvee y = y \alvee x,\\
		& x\alwedge\left(x\alvee y\right)=x, \quad \text{and} & & x\alvee\left(x\alwedge y\right)=x.
		\end{align*}
	\end{enumerate}
\end{definition}

Let us mention explicitly that distributivity is not supposed.

\begin{lemma}\label{res:two_types_of_lattices_are_equivalent}
	Let $S$ be a non-empty set.
	\begin{enumerate}
		\item Suppose that $S$ is supplied with a partial ordering such that the partially ordered set $(S,\leq)$ is a partially ordered lattice. For $x,y\in S$, set
		\begin{align*}
		x\alwedge y &\coloneqq x\wedge y
		\intertext{and}
		x\alvee y &\coloneqq x\vee y.
		\end{align*}
		Then the abstract algebra $(S,\alvee,\alwedge)$ is an algebraic lattice.
		\item Suppose that $S$ is supplied with two binary operations $\alwedge$ and $\alvee$ such that the algebra $(S,\alwedge,\alvee)$ is an algebraic lattice. For $x,y\in S$, say that $x\leq y$ if and only if
		\[
		x\alwedge y=x.
		\]
		Then $\leq$ is a partially ordering on $S$, and the partially ordered set $(S,\leq)$ is a partially ordered lattice. Moreover, for $x,y\in S$, we have
		\begin{align*}
		x\wedge y &= x\alwedge y
		\intertext{and}
		x\vee y &= x\alvee y,
		\end{align*}
		where $x\wedge y$ and $x\wedge y$ refer to the infimum and the supremum, respectively, in the partial ordering $\leq$.	
	\end{enumerate}
\end{lemma}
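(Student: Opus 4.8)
The plan is to verify each half of each part by a direct, formal computation from the relevant defining properties: in part~(1) from the characterising properties of binary infima and suprema in a partially ordered set, and in part~(2) from the eight identities in \cref{def:two_types_of_latices}.

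For part~(1), I would check the eight identities for the operations $\alwedge\coloneqq\wedge$ and $\alvee\coloneqq\vee$. Commutativity and idempotency of $\alwedge$ are immediate, since a greatest lower bound of $\{x,y\}$ does not depend on the order of $x$ and $y$, and a greatest lower bound of $\{x\}$ is $x$ itself. For associativity I would first observe that the infimum of a three-element set $\{x,y,z\}$ exists: the element $x\wedge(y\wedge z)$ is a lower bound of each of $x$, $y$, $z$, and any common lower bound of the three is $\leq y\wedge z$ and hence $\leq x\wedge(y\wedge z)$; the same argument applied to $(x\wedge y)\wedge z$ shows that both expressions equal this (unique) triple infimum, so they agree. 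For the absorption law $x\wedge(x\vee y)=x$, I would note that $x$ is a lower bound of $\{x,\,x\vee y\}$ because $x\leq x\vee y$, and that any lower bound of this pair is $\leq x$; hence $x$ is its greatest lower bound. The four identities involving $\alvee$ then follow from the four just proved by passing to the reverse order $(S,\geq)$, which is again a partially ordered lattice and in which suprema and infima are interchanged.

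For part~(2), the key preliminary step is the equivalence, valid for all $x,y\in S$, that $x\alwedge y=x$ if and only if $x\alvee y=y$: if $x\alwedge y=x$ then $x\alvee y=(x\alwedge y)\alvee y=y$ by commutativity and absorption, and conversely if $x\alvee y=y$ then $x\alwedge y=x\alwedge(x\alvee y)=x$ by absorption. Granting this, reflexivity of $\leq$ is idempotency, antisymmetry is commutativity, and transitivity is associativity: from $x\alwedge y=x$ and $y\alwedge z=y$ one gets $x\alwedge z=(x\alwedge y)\alwedge z=x\alwedge(y\alwedge z)=x\alwedge y=x$. Next I would show $x\alwedge y$ is the infimum of $\{x,y\}$ with respect to $\leq$: it is a lower bound since $(x\alwedge y)\alwedge x=x\alwedge y$ (reassociate and use idempotency) and similarly $(x\alwedge y)\alwedge y=x\alwedge y$, and it dominates any common lower bound $z$ since $z\alwedge(x\alwedge y)=(z\alwedge x)\alwedge y=z\alwedge y=z$. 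Dually, using the $\alvee$-characterisation of $\leq$ supplied by the preliminary equivalence so that the argument is literally the order-reversed copy of the previous one, $x\alvee y$ is the supremum of $\{x,y\}$. This simultaneously establishes that $(S,\leq)$ is a partially ordered lattice and the identifications $x\wedge y=x\alwedge y$ and $x\vee y=x\alvee y$.

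I do not anticipate a real obstacle; the whole lemma is bookkeeping with the axioms. The one place that calls for a little discipline is part~(2): rather than computing the least upper bound of $\{x,y\}$ from scratch, one should route the supremum half through the equivalence $x\alwedge y=x\Leftrightarrow x\alvee y=y$, which turns that half into a verbatim dualisation of the infimum half. A minor point in part~(1) is that associativity of $\wedge$ is not a triviality but the small lemma that the infimum of a three-element subset exists and is computed by either bracketing.
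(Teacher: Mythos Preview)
Your proposal is correct and follows essentially the same route as the paper: part~(1) is declared routine there (you simply spell out more of it), and in part~(2) both you and the paper prove the equivalence $x\alwedge y=x\Leftrightarrow x\alvee y=y$ from absorption and then use it to handle the supremum half; the only cosmetic difference is that you state this equivalence up front, whereas the paper inserts it between the infimum and supremum arguments.
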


\begin{proof}
	It is completely routine to verify the statement in part (1).
	
	We turn to part (2). It is an easy consequence of the first three identities in the left column in \cref{def:two_types_of_latices} that $\leq$ is a partially ordering on $S$.
	
	Take $x,y\in S$. We claim that $x\wedge y$ exists in the partially ordered set $(S,\leq)$ and that, in fact, $x\wedge y =x\alwedge y$. Since $\left(x\alwedge y\right)\alwedge y = x\alwedge \left(y\alwedge y\right)=x\alwedge y$, we have $x\alwedge y\leq y$. Since $\left(x\alwedge y\right)\alwedge x = x\alwedge \left(x\alwedge y\right)=\left(x\alwedge x\right)\alwedge y=x\alwedge y$, we have $x\alwedge y\leq x$. Then also $x\alwedge y=y\alwedge x\leq x$. Take $z\in S$, and suppose that $z\leq x$ and $z\leq y$, i.e., suppose that $z\alwedge x=z$ and $z\alwedge y=y$. Then $z\alwedge (x\alwedge y)=(z\alwedge x)\alwedge y= z\alwedge y=z$. Hence $z\leq x\alwedge y$ and the proof of the claim is complete.
	
	Before turning to the supremum, we note that, for $x,y\in S$, the fact that $x\alwedge y=x$ is equivalent to the fact that $x\alvee y=y$. It is here that the two identities in the fourth line of identities in \cref{def:two_types_of_latices} come in. Indeed, suppose that $x\alwedge y=x$. Then $x\alvee y =\left(x\alwedge y\right)\alvee y=y\alvee\left(x\alwedge y\right)=y\alvee\left(y\alwedge x\right)=y$. Conversely, suppose that $x\alvee y=y$. Then $x\alwedge y=x\alwedge\left(x\alvee y\right)=x$.
	
	Now take $x,y\in S$. We claim that $x\vee y$ exists in the partially ordered set $(S,\leq)$ and that, in fact, $x\vee y =x\alvee y$. Since $x\alwedge \left(x\alvee y\right)=x$, we have $x\leq x\alvee y$. Since $y\alwedge\left(x\alvee y\right)= y\alwedge\left(y\alvee x\right)=y$, we have $y\leq x\alvee y$. Take $z\in S$ and suppose that $x\leq z$ and $y\leq z$, i.e., suppose that $x\alwedge z=x$ and $y\alwedge z=y$. By what we have just established in the intermezzo, this is equivalent to supposing that $x\alvee z=z$ and $y\alvee z=z$. Then $\left(x\alvee y\right)\alvee z=x\alvee\left(y\alvee z\right)=x\alvee z=z$. Again by the intermezzo, it follows that $\left(x\alvee y\right)\alwedge z=x\alvee y$. Hence $x\alvee y\leq z$ and the proof of the claim is complete. Part (2) has now been established.

\end{proof}

The following is now clear from \cref{res:two_types_of_lattices_are_equivalent}.

\begin{proposition}\label{res:LAT_is_an_equational_class}
	The constructions in the parts \uppars{1} and \uppars{2} of \cref{res:two_types_of_lattices_are_equivalent} yield mutually inverse bijections between the category of partially ordered lattices \uppars{with the lattice homomorphisms as morphisms} and the category of abstract algebras that are algebraic lattices \uppars{with the abstract algebra homomorphisms as morphisms}. Under this isomorphism, the underlying sets and the maps that are the morphisms are kept.
\end{proposition}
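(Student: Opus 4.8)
The plan is to observe that the substance of the proposition is already contained in \cref{res:two_types_of_lattices_are_equivalent}; what remains is to record that the two constructions there are mutually inverse \emph{at the level of objects}, and that, under the resulting correspondence, a map between the underlying sets is a lattice homomorphism precisely when it is an abstract algebra homomorphism. Since both constructions retain the underlying set and act as the identity on maps, these two facts together immediately yield a pair of mutually inverse isomorphisms of categories.

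First I would check that the object assignments are mutually inverse. Starting from a partially ordered lattice $(S,\leq)$, part (1) of \cref{res:two_types_of_lattices_are_equivalent} produces the algebraic lattice with $x\alwedge y=x\wedge y$ and $x\alvee y=x\vee y$. Applying part (2) to this algebraic lattice gives the relation $x\leq' y\iff x\alwedge y=x\iff x\wedge y=x\iff x\leq y$, so $\leq'\,=\,\leq$; and the ``Moreover'' clause of part (2) says that infima and suprema in $(S,\leq')=(S,\leq)$ are recovered as $x\alwedge y=x\wedge y$ and $x\alvee y=x\vee y$. Thus we return to $(S,\leq)$ with its original lattice operations. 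Conversely, starting from an algebraic lattice $(S,\alwedge,\alvee)$, part (2) produces a partial order $\leq$ whose induced infimum and supremum are, by the ``Moreover'' clause, $x\wedge y=x\alwedge y$ and $x\vee y=x\alvee y$; part (1) applied to $(S,\leq)$ then returns precisely the operations $\alwedge$ and $\alvee$. Hence the two object assignments are mutually inverse bijections.

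Next I would match the morphisms. A map $h$ between the underlying sets of two partially ordered lattices is, by definition, a lattice homomorphism exactly when $h(x\wedge y)=h(x)\wedge h(y)$ and $h(x\vee y)=h(x)\vee h(y)$ for all $x,y$. Under the correspondence just described the operations $\alwedge,\alvee$ of the associated algebraic lattices are literally the maps $\wedge,\vee$, so this condition is word for word the statement that $h$ is an abstract algebra homomorphism between those algebraic lattices. Consequently, for objects that correspond under the bijection the hom-sets coincide as sets of functions, composition on each side is composition of functions, and identities correspond to identities. Combined with the object-level statement, this shows that the assignments of \cref{res:two_types_of_lattices_are_equivalent}, each being the identity on underlying sets and on maps, are mutually inverse isomorphisms of categories, which is the assertion.

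The argument is essentially bookkeeping, and there is no serious obstacle. The one point that genuinely uses \cref{res:two_types_of_lattices_are_equivalent} beyond its object-level content is that, to recover the original structure, it is not enough that the orderings (respectively, the operations) agree: one also needs the derived binary infima and suprema to coincide with the given operations, which is exactly what the ``Moreover'' clauses supply. A secondary, purely conventional, point is that ``lattice homomorphism'' for partially ordered lattices is read as ``preserves the binary operations $\vee$ and $\wedge$''; with that reading the identification of morphisms is immediate.
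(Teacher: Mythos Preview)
Your proposal is correct and follows precisely the route the paper takes: the paper simply states that the proposition ``is now clear from \cref{res:two_types_of_lattices_are_equivalent}'' without giving any further argument, and what you have written is exactly the routine unpacking of that claim. Your explicit checks that the object assignments are mutually inverse (using the ``Moreover'' clauses) and that lattice homomorphisms coincide with abstract algebra homomorphisms are the details the paper leaves implicit.
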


With \cref{res:two_types_of_lattices_are_equivalent} available, it is not so difficult to describe the unital vector lattice algebras as the abstract algebras (of a common unspecified type) where certain identities are satisfied.

\begin{lemma}\label{res:unital_vla_as_abstract_algebra}
	Let $\absalg$ be an abstract algebra with \uppars{not necessarily different} constants 0 and 1, a binary map $\oplus$, a unary map $\ominus$, a unary map $m_\lambda$ for every $\lambda\in\RR$, a binary map $\odot$, and binary maps $\alwedge$ and $\alvee$. Suppose that all of the following are satisfied:
	\begin{enumerate}
		\item $(x\oplus y)\oplus z=x\oplus(y\oplus z)$ for all $x,y,z\in \absalg$;\label{id:group_first}
		\item $x\oplus 0= x$ for all $x\in \absalg$;
		\item $x\oplus(\ominus x)=0$ for all $x\in \absalg$;
		\item $x\oplus y=y\oplus x$ for all $x,y\in \absalg$;\label{id:group_last}
		\item $m_\lambda(x\oplus y)=m_\lambda(x)\oplus m_\lambda(y)$ for all $\lambda\in\RR$ and $x,y\in \absalg$;\label{id:vector_space_first}
		\item $m_{\lambda + \mu}(x)=m_{\lambda}(x)\oplus m_{\mu}(x)$ for all $\lambda,\mu\in\RR$ and $x\in \absalg$;
		\item $m_{\lambda\mu}(x))=m_{\lambda}(m_\mu(x))$ for all $\lambda,\mu\in\RR$ and $x\in \absalg$;
		\item $m_1(x)=x$ for all $x\in \absalg$;\label{id:vector_space_last}
		\item $(x\odot y)\odot z=x\odot(y\odot z)$ for all $x,y,z\in \absalg$;\label{id:ass_alg_first}
		\item $x\odot(y\oplus z)=(x\odot y)\oplus(x\odot z)$ for all $x,y,z\in \absalg$;
		\item $(x\oplus y)\odot z=(x\odot z)\oplus(y\odot z)$ for all $x,y,z\in \absalg$;
		\item $m_\lambda(x\odot y)=m_\lambda(x)\odot y = x\odot m_\lambda(y)$ for all $\lambda\in\RR$ and $x,y\in \absalg$;
		\item $1\odot x=x\odot 1=x$ for all $x\in \absalg$;\label{id:ass_alg_last}
		\item $x\alwedge\left(y\alwedge z\right)=\left(x \alwedge y\right)\alwedge z$ and $x\alvee\left(y\alvee z\right)=\left(x \alvee y\right)\alvee z$ for all $x,y,z\in \absalg$;\label{id:lattice_first}
		\item $x\alwedge x = x$ and $x\alvee x = x$ for all $x\in \absalg$;
		\item $x\alwedge y=y\alwedge x$ and $x\alvee y = y \alvee x$ for all $x,y\in \absalg$;
		\item $x\alwedge\left(x\alvee y\right)=x$ and $x\alvee\left(x\alwedge y\right)=x$ for all $x,y\in \absalg$;\label{id:lattice_last}
		\item $x\oplus(y\alwedge z)=(x\oplus y)\alwedge(x\oplus z)$ for all $x,y,z\in \absalg$;\label{id:translation_and_ordering}
		\item $m_{\lambda}(0\alwedge x)=0\alwedge (m_\lambda (x))$ for all $\lambda\in\RR_{\geq 0}$ and $x\in \absalg$;\label{id:scalar_multiplication_and_ordering}
		\item $0 \alwedge ((x\alwedge(\ominus x))\odot(y\alwedge(\ominus y)))=0$ for all $x,y\in \absalg$.\label{id:algebra_multiplication_and_ordering}
	\end{enumerate}
	
	Set
	\begin{enumerate}
		\item[\textup{(a)}] $x+y\coloneqq x\oplus y$ for $x,y\in \absalg$;
		\item[\textup{(b)}] $\lambda x\coloneqq m_{\lambda}(x)$ for $\lambda\in\RR$ and $x\in \absalg$;
		\item[\textup{(c)}] $xy\coloneqq x\odot y$ for $x,y\in \absalg$.
	\end{enumerate}
	Supplied with the operations as defined under \textup{(a)}, \textup{(b)}, and \textup{(c)}, $\absalg$ is an associative algebra over the real numbers with zero element 0 and identity element 1.
	
	For $x,y\in \absalg$, say that $x\leq y$ when $x\alwedge y=x$. Then $\leq$ is a partial ordering on $\absalg$ that makes $\absalg$ into a partially ordered lattice. Moreover, for $x,y\in \absalg$, we have $x\wedge y=x\alwedge y$ and $x\vee y=x\alvee y$, where $\wedge$ and $\vee$ refer to the supremum resp.\ infimum in the partial ordering $\leq$.

	Supplied with the partial ordering $\leq$ and with the operations as defined under \textup{(a)}, \textup{(b)}, and \textup{(c)}, $\absalg$ is a unital vector lattice algebra with zero element 0 and identity element 1.
\end{lemma}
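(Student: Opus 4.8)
\emph{Strategy and the algebra.} The plan is to treat the three assertions in turn, each time just reading off the relevant axioms from the listed identities; the first two are routine, and the only step with any content is the positivity of products in the third. For the algebra structure: identities \partref{id:group_first}--\partref{id:group_last} are exactly the abelian-group axioms, so $(\absalg,+,0)$ is an abelian group, with $\ominus x$ forced to be the additive inverse $-x$. Identities \partref{id:vector_space_first}--\partref{id:vector_space_last} are the remaining four vector-space axioms, so under \textup{(a)} and \textup{(b)} the set $\absalg$ is a real vector space (whence also $\lambda\cdot 0=0$ and $(-1)x=-x$). Identity \partref{id:ass_alg_first} gives associativity of $\odot$; the two distributivity identities and the scalar-compatibility identity among \partref{id:ass_alg_first}--\partref{id:ass_alg_last} make $\odot$ into an $\RR$-bilinear map; and identity \partref{id:ass_alg_last} makes $1$ a two-sided identity for $\odot$. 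Hence, under \textup{(a)}, \textup{(b)}, \textup{(c)}, the set $\absalg$ is an associative unital algebra over $\RR$ with zero element $0$ and identity element $1$.

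\emph{The lattice.} Identities \partref{id:lattice_first}--\partref{id:lattice_last} are precisely the defining identities of an algebraic lattice in \cref{def:two_types_of_latices}, so part \uppars{2} of \cref{res:two_types_of_lattices_are_equivalent} applies directly: declaring $x\leq y$ to mean $x\alwedge y=x$ gives a partial ordering on $\absalg$ for which $(\absalg,\leq)$ is a partially ordered lattice, with $x\wedge y=x\alwedge y$ and $x\vee y=x\alvee y$ for all $x,y\in\absalg$. Henceforth we write $\wedge$ and $\vee$, and use them in place of $\alwedge$ and $\alvee$ in identities \partref{id:translation_and_ordering}--\partref{id:algebra_multiplication_and_ordering}.

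\emph{Compatibility of the order.} It remains to verify the three compatibility conditions for a unital vector lattice algebra. Translation invariance comes from identity \partref{id:translation_and_ordering}, which reads $z+(x\wedge y)=(z+x)\wedge(z+y)$: if $x\wedge y=x$ then $(z+x)\wedge(z+y)=z+x$, so $z+x\leq z+y$, and this is an equivalence since $+$ is a group operation; thus $(\absalg,+,\leq)$ is a partially ordered abelian group that is also a lattice. Positivity under non-negative scalars comes from identity \partref{id:scalar_multiplication_and_ordering}: for $\lambda\geq 0$ and $x\geq 0$ we have $0\wedge x=0$, hence $0\wedge(\lambda x)=m_\lambda(0\wedge x)=m_\lambda(0)=\lambda\cdot 0=0$, i.e.\ $\lambda x\geq 0$; together with the previous point this makes $\absalg$ a vector lattice. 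Finally, for $a,b\geq 0$, translation invariance gives $-a\leq 0\leq a$ and $-b\leq 0\leq b$, so $(-a)\alwedge(\ominus(-a))=(-a)\wedge a=-a$ and $(-b)\alwedge(\ominus(-b))=(-b)\wedge b=-b$; applying identity \partref{id:algebra_multiplication_and_ordering} with $x\coloneqq-a$ and $y\coloneqq-b$ therefore gives $0\wedge\bigl((-a)(-b)\bigr)=0$, and since $(-a)(-b)=ab$ in the ring $\absalg$ this says $ab\geq 0$. Hence products of positive elements are positive, and $\absalg$ is a unital vector lattice algebra with zero element $0$ and identity element $1$.

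\emph{Where the work is.} Nothing here is deep, but the last step is the one that is not pure bookkeeping: it relies on noticing that every element $\leq 0$ — in particular $-a$ whenever $a\geq 0$ — is of the form $x\alwedge(\ominus x)$, which is exactly why identity \partref{id:algebra_multiplication_and_ordering} is phrased the way it is. Once that is seen, the remaining verifications are entirely mechanical.
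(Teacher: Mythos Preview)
Your proof is correct and follows essentially the same route as the paper: identities (1)--(13) give the unital associative algebra, identities (14)--(17) feed into \cref{res:two_types_of_lattices_are_equivalent} for the lattice structure, and identities (18)--(20) supply the order compatibilities. The only cosmetic difference is in the handling of identity \partref{id:algebra_multiplication_and_ordering}: the paper observes globally that $x\alwedge(\ominus x)=-|x|$ in a vector lattice and rewrites (20) as $0\wedge(|x|\,|y|)=0$, whereas you instantiate directly with $x=-a$, $y=-b$ for given $a,b\geq 0$; both arrive at $0\wedge(ab)=0$ in the same way.
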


\begin{proof}
	The identities in \partref{id:group_first}--\partref{id:group_last} show that $\absalg$ becomes an abelian group under $+$ with identity element 0. Those in \partref{id:vector_space_first}--\partref{id:vector_space_last} show that $\absalg$ becomes a vector space with zero element 0 when the scalar multiplications as in (b) are added, and those in \partref{id:ass_alg_first}--\partref{id:ass_alg_last} guarantee that $\absalg$ becomes an associative algebra with zero element 0 and identity element 1 when the multiplication as in (c) is added.
	
	It becomes more interesting when the partial ordering is brought in. In view of \cref{res:two_types_of_lattices_are_equivalent}, the identities in \partref{id:lattice_first}--\partref{id:lattice_last} guarantee that $\leq$ is a partial ordering on $\absalg$ that makes $\absalg$ into a partially ordered lattice where the infimum $x\wedge y$ and supremum $x\vee y$ of two elements $x,y$ are given by $x\alwedge y$ and $x\alvee y$, respectively.
	
	It remains to be shown that the partial ordering $\leq$ is a vector space ordering, and also that the product of two positive elements of $\absalg$ is again positive.
	
	We start with the vector space ordering. Take $y,z\in \absalg$ and suppose that $y\leq z$, i.e, suppose that $y\alwedge z=y$. Take $x\in \absalg$. Using the identity in \partref{id:translation_and_ordering}, we have
	\[
	(x+y)\wedge (x+z)=(x\oplus y)\alwedge(x\oplus z)=x\oplus(y\alwedge z)=x\oplus y=x +y.
	\]
	Hence $x+z\leq y+z$. Take $x\in \absalg$ and $\lambda\in\RR_{\geq 0}$. Suppose that $0\leq x$, i.e, suppose that $0\alwedge x=0$. Using the identity in \partref{id:scalar_multiplication_and_ordering} (and, in the final equality,  the fact that we already know that $\absalg$ is a vector space), we have
	\[
	0\wedge(\lambda x)=0\alwedge(m_\lambda(x))=m_\lambda(0\alwedge x)=m_\lambda(0)=\lambda 0=0.
	\]
	Hence $0\leq \lambda x$,
	
	We turn to the product of two positive elements of $\absalg$. Since we know by now that $\absalg$ is a vector lattice, we can equivalently formulate the identity in \partref{id:algebra_multiplication_and_ordering} as the fact that $0\wedge (\abs{x}\abs{y})=0$ for all $x,y\in \absalg$. This implies (and is equivalent to) the fact that the product of two positive elements of $\absalg$ is again positive.
\end{proof}

Obviously, any unital vector lattice algebra gives rise, in a natural way, to an abstract algebra with constants and operations as in \cref{res:unital_vla_as_abstract_algebra} where all identities in (1)--(20) in \cref{res:unital_vla_as_abstract_algebra} are satisfied. As for partially ordered lattices and algebraic lattices, the two constructions are mutually inverse. Moreover, the unital vector lattice algebra homomorphisms correspond to the abstract algebra homomorphisms. We therefore have the following analogue of \cref{res:LAT_is_an_equational_class}.

\begin{proposition}\label{res:VLAONE_is_equational_class}
	The category $\VLAONE$ of unital vector lattice algebras, with the unital vector lattice algebra homomorphisms as morphism, is isomorphic to the category of abstract algebras with constants and operations as in \cref{res:unital_vla_as_abstract_algebra} where the identities \uppars{1}--\uppars{20} in \cref{res:unital_vla_as_abstract_algebra} are satisfied, with the abstract algebra homomorphisms as morphisms. Under this isomorphism, the underlying sets and the maps that are the morphisms are kept.
\end{proposition}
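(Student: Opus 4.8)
The plan is to exhibit, at the level of categories, mutually inverse assignments on objects and on morphisms, and then to note that functoriality is automatic because in both categories the morphisms are just maps of the underlying sets. On objects, one direction is precisely \cref{res:unital_vla_as_abstract_algebra}: to an abstract algebra $\absalg$ with constants $0,1$ and operations $\oplus$, $\ominus$, $(m_\lambda)_{\lambda\in\RR}$, $\odot$, $\alwedge$, $\alvee$ satisfying the identities \uppars{1}--\uppars{20}, one associates the unital vector lattice algebra on the same underlying set whose algebra operations are given by \textup{(a)}, \textup{(b)}, \textup{(c)} and whose order is $x\leq y :\Leftrightarrow x\alwedge y=x$. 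In the reverse direction, to a unital vector lattice algebra $A$ one associates the abstract algebra on the same underlying set with constants $0,1$, with $\oplus,\ominus,(m_\lambda)_\lambda,\odot$ the addition, additive inverse, scalar multiplications and product, and with $\alwedge,\alvee$ the lattice operations $\wedge,\vee$; the vector lattice algebra axioms together with \cref{res:two_types_of_lattices_are_equivalent}\uppars{1} show at once that all of \uppars{1}--\uppars{20} hold (identities \uppars{18}--\uppars{20} being exactly compatibility of $\leq$ with translations, with non-negative scalars, and positivity of products of positive elements).

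First I would check that these two assignments are mutually inverse on objects. Going from a unital vector lattice algebra to its abstract algebra and back leaves the additive group, the scalar multiplications and the product unchanged by construction, and recovers the order by the prescription $x\alwedge y=x$, i.e. $x\wedge y=x$, which returns the original partial order by definition of $\wedge$. Conversely, starting from an abstract algebra satisfying \uppars{1}--\uppars{20}, \cref{res:unital_vla_as_abstract_algebra} already records that in the associated vector lattice algebra one has $x\wedge y=x\alwedge y$ and $x\vee y=x\alvee y$, so the abstract algebra obtained by reading off $\wedge,\vee$ again is the one we started with, and all other operations are literally unchanged by \textup{(a)}, \textup{(b)}, \textup{(c)}.

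Next I would verify that a map between the underlying sets of two such structures is a unital vector lattice algebra homomorphism if and only if it is an abstract algebra homomorphism between the associated abstract algebras. For the forward implication: a unital vector lattice algebra homomorphism is linear, multiplicative, unital, and a lattice homomorphism, hence preserves $0,1,\oplus,\ominus$, every $m_\lambda$, $\odot$, and, being a lattice homomorphism, also $\alwedge=\wedge$ and $\alvee=\vee$. For the converse: a map preserving these constants and operations is in particular additive, multiplicative and unital, hence linear and an algebra homomorphism; and since it preserves $\alwedge$ and $\alvee$, \cref{res:two_types_of_lattices_are_equivalent}\uppars{2} (equivalently \cref{res:LAT_is_an_equational_class}) shows it preserves the partial orders and the induced finite infima and suprema, so it is a lattice homomorphism. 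Thus the two classes of morphisms coincide; the object bijection sends identities to identities and respects composition because in both categories composition is composition of underlying maps, which gives the asserted isomorphism of categories preserving underlying sets and morphisms.

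I expect the only point needing genuine care — and it is already discharged by the earlier results — to be the equivalence between ``$h$ preserves the lattice \emph{operations} $\alwedge,\alvee$'' and ``$h$ is a homomorphism of \emph{partially ordered} lattices''; everything else is a matter of matching two lists of operations and axioms. In effect this proposition is to \cref{res:unital_vla_as_abstract_algebra} and \cref{res:two_types_of_lattices_are_equivalent} exactly what \cref{res:LAT_is_an_equational_class} is to \cref{res:two_types_of_lattices_are_equivalent}, and I would present it in that spirit, without repeating the routine verifications.
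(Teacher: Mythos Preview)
Your proposal is correct and follows exactly the approach the paper takes: the paper merely remarks, just before stating the proposition, that the reverse construction is obvious, that the two constructions are mutually inverse as in the lattice case, and that the two notions of homomorphism coincide. You have simply spelled out these routine verifications in more detail than the paper does; the one phrase to tighten is that ``additive, multiplicative and unital'' alone do not give linearity\textemdash it is the preservation of each $m_\lambda$ (which you do assume) that yields it.
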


Obviously, there are isomorphisms similar to those in \cref{res:LAT_is_an_equational_class,res:VLAONE_is_equational_class} for $\VL$ and $\VLA$.  Once one notices that one can express the positivity of an identity element 1 of a vector lattice algebra by requiring that $0\alwedge 1=0$, it becomes clear that there is also a similar isomorphism for $\VLAONEPOS$. For many categories of algebraic structures (groups, abelian groups, vector spaces, rings with identity elements, algebras, commutative algebras, \ldots), where there is no partial ordering that needs to be `equationalised', the existence of a similar isomorphism with a category of abstract algebras is immediate from the axioms for these structures.

The existence of a `similar' isomorphism can be made precise by saying that all these categories are isomorphic to an equational class of abstract algebras. This brings us to the next section.

\section{Universal algebra: part II}\label{sec:universal_algebra_part_II}

\noindent We have seen in \cref{res:LAT_is_an_equational_class} and \cref{res:VLAONE_is_equational_class} how two categories from the partially ordered realm are isomorphic to categories of abstract algebras. In the abstract algebraic side of the picture, the objects of the category are those abstract algebras (all of a common type) `where certain identities are satisfied'. We shall now formalise this concept of `identities being satisfied'.

Let $\absalg$ be an abstract algebra of type $\type$. By way of example, suppose that one of the operations on $\absalg$ is a binary operator $\oplus$. We want to express the fact that this operation is associative, i.e., that $x_1\oplus(x_2\oplus x_3)=(x_1\oplus x_2)\oplus x_3$ for all $x_1,x_2,x_3\in A$. A first attempt would be to take a set $S=\{s_1,s_2,s_3\}$ of three elements, take the terms (rewritten in a legible way) $(s_1\oplus s_2)\oplus s_3$ and $s_1\oplus(s_2\oplus s_3)$ in $\termalgS$, and require that $(s_1\oplus s_2)\oplus s_3= s_1\oplus(s_2\oplus s_3)$ `is satisfied in $\absalg$'. The problem is that this does not make sense. Firstly, the left and the right hand sides are not elements of $\absalg$. They are elements of $\termalgS$ and, secondly, they are \emph{not} equal in $\termalgS$. There are two ways to get further.

The first one is to take the terms $(s_1\oplus s_2)\oplus s_3$ and $s_1\oplus(s_2\oplus s_3)$, and assign to them the ternary operations on $\absalg$ that send a triple $(x_1,x_2,x_3)\in\absalg^3$ to $x_1\oplus(x_2\oplus x_3)$ and $(x_1\oplus x_2)\oplus x_3$, respectively. The associativity can then be expressed by saying that these two maps from $\absalg^3$ to $\absalg$ are equal. This is the approach that is taken for the general case in \cite[Definitions~4.31 and~4.35 ]{bergman_UNIVERSAL_ALGEBRA:2012}. Here, in order to be able to accommodate identities in an arbitrarily large number of variables, one takes a countably infinite set $S$. Given two terms $t_1,t_2\in\termalgS$, one associates operations $t_1^\absalg$ and $t_2^\absalg$  with them, and requires that these be equal as maps from $\absalg^n$ (where $n$ is appropriate) to $\absalg$. There are some formalities to be taken care of then, however. For example, it could be the case that the `natural' number of arguments of $t_1^A$ differs from that of $t_2^A$. One could want to express the fact that $x_1\oplus x_2 = (x_3\oplus x_2)\oplus x_1$ for all $x_1,x_2,x_3\in \absalg$, but the natural domain for the map for the left hand side is $\absalg^2$, whilst for the right hand side this is $\absalg^3$.

The second one, and the one we shall take, is the following. Take a three-point set $S$ again. For all $x_1,x_2,x_3\in \absalg$, there exists a map $h_{x_1,x_2,x_3}:S\to \absalg$ such that $h_{x_1,x_2,x_3}(s_i)=x_i$ for $i=1,2,3$. By \cref{res:term_algebra_is_free}, such a map extends uniquely to an abstract algebra homomorphism $\fact{h}_{x_1,x_2,x_3}:\termalgS\to\absalg$. Then $\fact{h}_{x_1,x_2,x_3}(s_1\oplus(s_2\oplus s_3))=x_1\oplus(x_2\oplus x_3)$ and $\fact{h}_{x_1,x_2,x_3}((s_1\oplus s_2)\oplus s_3)=(x_1\oplus x_2)\oplus x_3$. It follows from this that the associativity of $\oplus$ in $\absalg$ can equally well be expressed by requiring that $\fact{h}(s_1\oplus(s_2\oplus s_3))=\fact{h}((s_1\oplus s_2)\oplus s_3)$ for every map $h: S\to\absalg$, where, as usual, $\fact{h}:\termalgS\to\absalg$ is the abstract algebra homomorphism that extends $h$. Since every abstract algebra homomorphism from $\termalgS$ to $\absalg$ is the unique extension of its restriction to $S$, one can equally well (with a change in notation) require that $h(s_1\oplus(s_2\oplus s_3))=h((s_1\oplus s_2)\oplus s_3)$ for every abstract algebra homomorphism $h: \termalgS\to\absalg$. Of course, one wants to be able to do this with an arbitrarily large number of variables involved. This leads to the following definition, as in \cite[Definition~9.4.1]{bergman_AN_INVITAION_TO_GENERAL_ALGEBRA_AND_UNIVERSAL_CONSTRUCTIONS} and \cite[Section~2.8]{jacobson_BASIC_ALGEBRA_II_SECOND_EDITION:1989}

Fix, once and for all, a countably infinite set $\infset=\{s_1,s_2,\dotsc\}$.

\begin{definition}
	Let $\type$ be a type. Take two terms $t_1,t_2\in\termalginfset$. Let $\absalg$ be an abstract algebra of type $\type$. Then \emph{$\absalg$ satisfies $t_1\approx t_2$} when $h(t_1)=h(t_2)$ for every abstract algebra homomorphism $h:\termalginfset\to\absalg$. For a subset $\Sigma$ of $\termalginfset\times\termalginfset$,  \emph{$\absalg$ satisfies $\Sigma$} when $\absalg$ satisfies $t_1\approx t_2$ for every pair $(t_1,t_2)\in\Sigma$.
\end{definition}

This definition depends on the choice of $\infset$ because $t_1$ and $t_2$ are elements of the set $\termalginfset$ that depends on this choice. A moment's thought shows that the equalities of operations on $\absalg$\textemdash which is what we are after\textemdash that is equivalent to the satisfaction of the identities from $\Sigma$ is independent of this choice. It is for this reason that there is no harm in fixing a particular choice for $\infset$ as we have done.

For $\Sigma\subseteq\termalginfset\times\termalginfset$, the class of all abstract algebras of type $\type$ satisfying $\Sigma$ is called the \emph{equational class defined by $\Sigma$}. Together with the abstract algebra homomorphism between them, it forms the subcategory $\ALGRHOSIGMA$ of $\ALGRHO$.

It is an important point that we can force identities to be satisfied by passing to an abstract quotient algebra.

\begin{lemma}\label{res:identities_forced}
	Let $\absalg$ be an abstract algebra of type $\type$, and let $\theta$ be a congruence relation on $\absalg$. Take $t_1,t_2\in\termalginfset$. Then $\absalg/\theta$ satisfies $t_1\approx t_2$ if and only if  $(h(t_1),h(t_2))\in\theta$ for all abstract algebra homomorphisms $h:\termalginfset\to\absalg$.
\end{lemma}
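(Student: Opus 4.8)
The plan is to exploit two facts that are already available: the quotient map $q_\theta:\absalg\to\absalg/\theta$ is an abstract algebra homomorphism with $\ker q_\theta=\theta$ (so that $(a,b)\in\theta$ means precisely $q_\theta(a)=q_\theta(b)$), and $\termalginfset$ is the free abstract algebra of type $\type$ over $\infset$, by \cref{res:term_algebra_is_free}. The proof is then just a matter of transporting homomorphisms between $\absalg$ and $\absalg/\theta$ along $q_\theta$ and unwinding the definition of ``satisfies $t_1\approx t_2$''.

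First I would do the ``only if'' direction, which is immediate. Suppose $\absalg/\theta$ satisfies $t_1\approx t_2$, and let $h:\termalginfset\to\absalg$ be any abstract algebra homomorphism. Then $q_\theta\circ h:\termalginfset\to\absalg/\theta$ is again an abstract algebra homomorphism, so by hypothesis $q_\theta(h(t_1))=(q_\theta\circ h)(t_1)=(q_\theta\circ h)(t_2)=q_\theta(h(t_2))$, and since $\ker q_\theta=\theta$ this says exactly that $(h(t_1),h(t_2))\in\theta$.

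For the converse the key auxiliary observation is that \emph{every} abstract algebra homomorphism $g:\termalginfset\to\absalg/\theta$ has the form $g=q_\theta\circ h$ for some abstract algebra homomorphism $h:\termalginfset\to\absalg$. Indeed, $q_\theta$ is surjective, so for each $s\in\infset$ we may choose $h(s)\in\absalg$ with $q_\theta(h(s))=g(s)$; by \cref{res:term_algebra_is_free} this map $\infset\to\absalg$ extends uniquely to an abstract algebra homomorphism $h:\termalginfset\to\absalg$, and then $q_\theta\circ h$ and $g$ are abstract algebra homomorphisms $\termalginfset\to\absalg/\theta$ that agree on $\infset$, hence coincide, again by the uniqueness half of \cref{res:term_algebra_is_free}. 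Granting this, assume $(h(t_1),h(t_2))\in\theta$ for all abstract algebra homomorphisms $h:\termalginfset\to\absalg$, and let $g:\termalginfset\to\absalg/\theta$ be arbitrary; writing $g=q_\theta\circ h$ as above, we get $g(t_1)=q_\theta(h(t_1))=q_\theta(h(t_2))=g(t_2)$ because $(h(t_1),h(t_2))\in\theta=\ker q_\theta$. As $g$ was arbitrary, $\absalg/\theta$ satisfies $t_1\approx t_2$.

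The only step with any real content is this lifting observation in the converse direction: it is where surjectivity of $q_\theta$ (together with a choice of preimages) and both the existence and uniqueness parts of \cref{res:term_algebra_is_free} are genuinely used. Everything else is bookkeeping with the definitions of ``satisfies $t_1\approx t_2$'' and of $\ker q_\theta$, so I do not anticipate any obstacle beyond stating the lifting carefully.
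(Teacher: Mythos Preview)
Your proof is correct and follows essentially the same approach as the paper. The paper's proof is a terse two-sentence version of exactly what you wrote: it states that, by surjectivity of $q_\theta$ and the universal property of $\termalginfset$, the abstract algebra homomorphisms $\termalginfset\to\absalg/\theta$ are precisely the compositions $q_\theta\circ h$, and declares the lemma an immediate consequence; you have simply unpacked both directions and the lifting step in full.
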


\begin{proof}
	Let $q_\theta:A\to A/\theta$ be the quotient map. It is a consequence of the surjectivity of the abstract algebra homomorphism $q_\theta$ and the universal property of $\termalginfset$ that the abstract algebra homomorphisms $h_\theta:\termalginfset\to\absalg/\theta$ are precisely the compositions $q_\theta\circ h$ for the abstract algebra homomorphisms $h: \termalginfset\to\absalg$. The statement in the lemma is an immediate consequence of this.
\end{proof}

\begin{lemma}\label{res:kernel_contains_pairs}
	Let $\absalg$ and $\absalgtwo$ be abstract algebras of the same type $\type$, and let $h:\absalg\to\absalgtwo$ be an abstract algebra homomorphism. Take $t_1,t_2\in\termalginfset$. If $\absalgtwo$ satisfies $t_1\approx t_2$, then $(h^\prime(t_1),h^\prime(t_2))\in\ker h$ for every abstract algebra homomorphism $h^\prime:\termalginfset\to\absalg$.
\end{lemma}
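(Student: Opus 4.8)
The plan is to read off the conclusion directly from the definition of ``$\absalgtwo$ satisfies $t_1\approx t_2$'', once one feeds that definition the appropriate homomorphism. The only preliminary observation needed is that abstract algebra homomorphisms of a fixed type compose: if $h^\prime:\termalginfset\to\absalg$ and $h:\absalg\to\absalgtwo$ are abstract algebra homomorphisms, then so is $h\circ h^\prime:\termalginfset\to\absalgtwo$. This is immediate from the two clauses in the definition of an abstract algebra homomorphism, since $h$ and $h^\prime$ each preserve the constants $f^{(\cdot)}$ for $f$ with $\rho(f)=0$ and each operation $f^{(\cdot)}$ for $f$ with $\rho(f)\geq 1$, hence so does the composite.

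So, given an arbitrary abstract algebra homomorphism $h^\prime:\termalginfset\to\absalg$, I would set $g\coloneqq h\circ h^\prime$, which by the above is an abstract algebra homomorphism from $\termalginfset$ to $\absalgtwo$. The hypothesis that $\absalgtwo$ satisfies $t_1\approx t_2$ means, by definition, that $g(t_1)=g(t_2)$ for \emph{every} abstract algebra homomorphism $g:\termalginfset\to\absalgtwo$; applied to our particular $g$ this gives $h(h^\prime(t_1))=(h\circ h^\prime)(t_1)=(h\circ h^\prime)(t_2)=h(h^\prime(t_2))$. By the definition of $\ker h$, the equality $h(h^\prime(t_1))=h(h^\prime(t_2))$ is exactly the statement $(h^\prime(t_1),h^\prime(t_2))\in\ker h$. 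As $h^\prime$ was arbitrary, this is the desired conclusion.

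I do not expect any genuine obstacle here: the lemma is a one-line consequence of the definitions, the only ``content'' being the (routine) fact that abstract algebra homomorphisms compose. If one prefers, one can also deduce it from \cref{res:identities_forced} applied with $\theta\coloneqq\ker h$: since $\absalg/\ker h$ is isomorphic to a subalgebra of $\absalgtwo$, it inherits the identity $t_1\approx t_2$ from $\absalgtwo$, and \cref{res:identities_forced} then yields $(h^\prime(t_1),h^\prime(t_2))\in\ker h$ for all $h^\prime:\termalginfset\to\absalg$. The direct argument above is, however, shorter and self-contained.
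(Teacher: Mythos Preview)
Your proof is correct and follows essentially the same approach as the paper: compose $h$ with $h^\prime$ to obtain an abstract algebra homomorphism $\termalginfset\to\absalgtwo$, apply the definition of $\absalgtwo$ satisfying $t_1\approx t_2$, and read off membership in $\ker h$. Your additional remark about deducing the result via \cref{res:identities_forced} is a valid alternative, though the paper uses only the direct argument.
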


\begin{proof}
	Take an abstract algebra homomorphism $h^\prime:\termalginfset\to\absalg$. Then $h\circ h^\prime:\termalginfset\to \absalgtwo$ is an abstract algebra homomorphism. Hence $(h\circ h^\prime)(t_1)=(h\circ h^\prime)(t_2)$, showing that $(h^\prime(t_1),h^\prime(t_2))\in\ker h$.
\end{proof}

We can now construct a free object of an equational class over a non-empty set. Let $\type$ be a type, and take a subset  $\Sigma\subseteqq\termalginfset\times\termalginfset$. Let $S$ be a non-empty set. We then take the abstract term algebra $\termalgS$, which contains $S$ as a subset, and we let $\theta$ be the smallest congruence relation on $\termalgS$ that contains the pairs $(h^\prime(t_1),h^\prime(t_2))$ for all $(t_1,t_2)\in\Sigma$ and all abstract algebra homomorphisms $h^\prime:\termalginfset\to\termalgS$. Let $q_\theta:\termalgS\to\termalgS/\theta$ denote the quotient map, and let $q_\theta|_S$ denote its restriction to $S$.

We see from \cref{res:identities_forced} that $\termalgS/\theta$ satisfies $\Sigma$, i.e., it is an object of $\ALGRHOSIGMA$.

Furthermore, we claim that the pair $(q_\theta|_S,\termalgS/\theta)$ is a free object of $\ALGRHOSIGMA$ over the object $S$ in $\SET$. To see this, let $\absalg\in\ALGRHOSIGMA$, and let $h:S\to\absalg$ be a map. By \cref{res:term_algebra_is_free}, there exists a unique abstract algebra homomorphism $\fact{h}:\termalgS\to\absalg$ such that $\fact{h}(s)=h(s)$ for all $h\in S$.

Take $(t_1,t_2)\in\Sigma$, and let $h^\prime:\termalginfset\to \termalgS$ be an arbitrary abstract algebra homomorphism.  Since $\absalg$ satisfies $t_1\approx t_2$, \cref{res:kernel_contains_pairs} shows that $(h^\prime(t_1),h^\prime(t_2))\in\ker \fact{h}$. Thus the congruence relation $\ker f$ on $\termalgS$ contains the generators of $\theta$, and we conclude that $\theta\subseteq\ker \fact{h}$. \cref{res:factoring_a_homomorphism} then implies that there is a unique abstract algebra homomorphism $\factfact{h}:\termalgS/\theta\to A$ such that $\fact{h}=\factfact{h}\circ q_\theta$. For $s\in S$, this implies that $(\factfact{h}\circ q_\theta|_S)(s)=\fact{h}(s)=h(s)$. Hence we have found a factoring abstract algebra homomorphism $\factfact{h}$. It remains to show uniqueness. Suppose that $h_0:\termalgS/\theta\to\absalg$ is an abstract algebra homomorphism such that $(h_0\circ q_\theta|_S) (s)=h(s)$ for all $s\in S$. Then $h_0\circ q_\theta: \termalgS\to A$ is an abstract algebra homomorphism such that $(h_0\circ q_\theta)(s)=(h_0\circ q_\theta|_S)(s)=h(s)=\fact{h}(s)$ for $s\in S$. This implies that $h_0\circ q_\theta=\fact{h}$ and this, in turn, shows that $h_0=\factfact{h}$.

All in all, we have shown the following. Its main part is the existence of a object, but we have also included the construction of a concrete realisation of it.

\begin{theorem}\label{res:free_algebra_with_relations_exists}
	Let $\type$ be a type, and take $\Sigma\subseteq\termalginfset\times\termalginfset$. Let $S$ be a non-empty set, and let $\theta$ be the smallest congruence relation on $\termalgS$ that contains the pairs $(h^\prime(t_1),h^\prime(t_2))$ for all $(t_1,t_2)\in\Sigma$ and all abstract algebra homomorphisms $h^\prime:\termalginfset\to\termalgS$. Then $\termalginfset/\theta$ is an abstract algebra of type $\type$ that satisfies $\Sigma$. Let $q_\theta:\termalgS\to\termalgS/\theta$ denote the quotient map, and let $q_\theta|_S$ denote its restriction to the subset $S$ of $\termalgS$.
	
	For every abstract algebra $\absalg$ of type $\type$ that satisfies $\Sigma$, and for every map $h:S\to \absalg$, there is a unique abstract algebra homomorphism $\fact{h}:\termalgS/\theta\to\absalg$ such that $h=\fact{h}\circ q_\theta|_S$ for all $s\in S$:
	
	\begin{equation*}
	\begin{tikzcd}
	S\arrow[r,"q_\theta|_S"]\arrow[dr, "h"]& \termalgS/\theta\arrow[d, "\fact{h}"]
	\\& \absalg
	\end{tikzcd}
	\end{equation*}
	
	That is, $\FSETALGRHOSIGMA{S}$ exists and is equal to $\termalgS/\theta$. 		
\end{theorem}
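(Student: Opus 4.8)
The plan is to check the two halves of the statement separately: that $\termalgS/\theta$ is an object of $\ALGRHOSIGMA$, i.e.\ an abstract algebra of type $\type$ satisfying $\Sigma$; and that the pair $(q_\theta|_S,\termalgS/\theta)$ has the universal property of a free object. Once both are in place, the identification of $\termalgS/\theta$ with $\FSETALGRHOSIGMA{S}$, together with its essential uniqueness, follows from the general remarks after \cref{def:free_object}. Everything will be assembled from the four results \cref{res:term_algebra_is_free}, \cref{res:identities_forced}, \cref{res:kernel_contains_pairs}, and \cref{res:factoring_a_homomorphism}, which have been set up precisely for this purpose.

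For the first half, I would first note that $\termalgS$ is an abstract algebra of type $\type$ by \cref{res:term_algebra_is_free}, so $\termalgS/\theta$ is again one by the construction of abstract quotient algebras, and $q_\theta$ is an abstract algebra homomorphism. To see that $\termalgS/\theta$ satisfies $\Sigma$, I would fix a pair $(t_1,t_2)\in\Sigma$ and apply \cref{res:identities_forced} with $\absalg$ taken to be $\termalgS$: the assertion that $\termalgS/\theta$ satisfies $t_1\approx t_2$ is then equivalent to the statement that $(h^\prime(t_1),h^\prime(t_2))\in\theta$ for every abstract algebra homomorphism $h^\prime:\termalginfset\to\termalgS$, and this is true by the very definition of $\theta$ as the congruence relation on $\termalgS$ generated by exactly these pairs.

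For the universal property, take any $\absalg$ in $\ALGRHOSIGMA$ and any map $h:S\to\absalg$. By \cref{res:term_algebra_is_free} there is a unique abstract algebra homomorphism $\fact{h}:\termalgS\to\absalg$ extending $h$. The one step that deserves care is showing $\theta\subseteq\ker\fact{h}$: since $\ker\fact{h}$ is itself a congruence relation on $\termalgS$, it suffices to verify that it contains the generating family of $\theta$, and since $\absalg$ satisfies $t_1\approx t_2$ for each $(t_1,t_2)\in\Sigma$, \cref{res:kernel_contains_pairs} gives $(h^\prime(t_1),h^\prime(t_2))\in\ker\fact{h}$ for every $h^\prime:\termalginfset\to\termalgS$, which is precisely that family. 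Then \cref{res:factoring_a_homomorphism} produces a unique abstract algebra homomorphism $\factfact{h}:\termalgS/\theta\to\absalg$ with $\fact{h}=\factfact{h}\circ q_\theta$, and restricting to $S$ gives $h=\factfact{h}\circ q_\theta|_S$.

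It remains to prove uniqueness of the factoring morphism. If $h_0:\termalgS/\theta\to\absalg$ is an abstract algebra homomorphism with $h_0\circ q_\theta|_S=h$, then $h_0\circ q_\theta:\termalgS\to\absalg$ is an abstract algebra homomorphism agreeing with $h$ on $S$, hence coincides with $\fact{h}$ by the uniqueness clause of \cref{res:term_algebra_is_free}; as $q_\theta$ is surjective, this forces $h_0=\factfact{h}$. I do not anticipate a real obstacle anywhere: the only places demanding attention are keeping the homomorphisms out of $\termalginfset$ distinct from those out of $\termalgS$, and confirming that $\theta$ has been generated by exactly the right set of pairs so that \cref{res:identities_forced} applies on the one side and \cref{res:kernel_contains_pairs} together with \cref{res:factoring_a_homomorphism} apply on the other.
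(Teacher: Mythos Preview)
Your proposal is correct and follows essentially the same route as the paper: both arguments use \cref{res:identities_forced} to see that $\termalgS/\theta$ satisfies $\Sigma$, then lift $h$ to $\fact{h}$ via \cref{res:term_algebra_is_free}, apply \cref{res:kernel_contains_pairs} to get $\theta\subseteq\ker\fact{h}$, factor through the quotient via \cref{res:factoring_a_homomorphism}, and deduce uniqueness by pulling back along $q_\theta$ and invoking the uniqueness clause of \cref{res:term_algebra_is_free}. The only cosmetic difference is that you make the surjectivity of $q_\theta$ explicit in the final step, whereas the paper leaves this implicit.
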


\begin{remark}
	\quad
	\begin{enumerate}
		\item If $\ALGRHOSIGMA$ contains an object that has at least two elements, then $q_\theta|_S$ must be injective. Consequently, $j$ is \emph{not} injective if and only if $S$ contains at least two elements and $\ALGRHOSIGMA$ consists only of the one-point algebra of type $\type$. For many equational classes of practical interest, $q_\theta|_S$ is, therefore, injective.
		\item For the choice $\Sigma=\emptyset$ one retrieves the fact from \cref{res:term_algebra_is_free} that $\FSETALGRHO{S}$ exists and equals $\termalgS$. The injectivity of the accompanying map $j$ is not obtained in this fashion, but follows easily from the universal property since $\termalgS$ is an abstract algebra of type $\type$ that has at least two elements.
	\end{enumerate}
\end{remark}	

\cref{res:free_algebra_with_relations_exists} is a classical result; see \cite[Theorem~2.10]{jacobson_BASIC_ALGEBRA_II_SECOND_EDITION:1989}, for example. It can also be found as (a part of)  \cite[Corollary~4.30]{bergman_UNIVERSAL_ALGEBRA:2012}, where it is actually proved\textemdash with a slightly different proof\textemdash that free objects over non-empty sets exist in varieties of abstract algebras. As in \cite[p.9]{bergman_UNIVERSAL_ALGEBRA:2012}, we say that a \emph{variety of abstract algebras} is a class of algebras of the same type that is closed under taking subalgebras, abstract algebra homomorphic images, and abstract algebra products. Some sources use both the terms `variety' and `equational class' for what we call an `equational class'; see \cite[p.81]{jacobson_BASIC_ALGEBRA_II_SECOND_EDITION:1989} and \cite[p.152]{gratzer_UNIVERSAL_ALGEBRA_SECOND_EDITION:1979}. As we shall see in a moment, a theorem of Birkhoff's shows that there is no actual ambiguity in the terminology.

It is routine to verify that $\ALGRHOSIGMA$ is closed under the taking of abstract subalgebras and the formation of arbitrary abstract algebra products. Using the universal property of $\termalginfset$, one sees that it is also closed under the taking of abstract algebra homomorphic images. Every equational class (in our terminology) is, therefore, a variety (in our terminology).
The converse is actually also true by Birkhoff's theorem; see \cite[Theorem~4.41]{bergman_UNIVERSAL_ALGEBRA:2012} and \cite[Theorem~2.15]{jacobson_BASIC_ALGEBRA_II_SECOND_EDITION:1989}, for example. Therefore, there is no ambiguity in terminology, and \cite[Corollary~4.30]{bergman_UNIVERSAL_ALGEBRA:2012} and \cref{res:free_algebra_with_relations_exists} are actually equivalent.

\section{Free vector lattices and free objects of categories of vector lattice algebras}\label{sec:free_vector_lattices_and_free_vector_lattice_algebras}

\noindent It is clear from \cref{res:VLAONE_is_equational_class} that the category of unital vector lattice algebras is isomorphic to a category of abstract algebras that is an equational class. At the level of objects, the isomorphism keeps the underlying sets, but views them as different structures. At the level of morphisms, the maps between the underlying sets are kept, but are observed to have different pertinent properties.  \cref{res:free_algebra_with_relations_exists} therefore implies that free unital vector lattice algebras over non-empty sets exist. Let us spell out the details once more. The additional properties under (1), (2), and (3) follow from general principles.

\begin{theorem}\label{res:FSETVLAONE_exists}
	Let $S$ be a non-empty set. Then there exist a unital vector lattice algebra $\FSETVLAONE{S}$ and a map $j:S \to\FSETVLAONE{S}$ with the property that, for any map $\varphi:S\to A$ from $S$ into a unital vector lattice algebra $\vlaone$, there exists a unique unital vector lattice algebra homomorphism $\fact{\varphi}$ such that the diagram
	\begin{equation*}
	\begin{tikzcd}
	S\arrow[r, "j"]\arrow[dr, "\varphi"]& \FSETVLAONE{S}\arrow[d, "\fact{\varphi}"]
	\\ & \vlaone
	\end{tikzcd}
	\end{equation*}
	is commutative. Furthermore:
	
	\begin{enumerate}
		\item the pair $(j,\FSETVLAONE{S})$ is unique up to a unique compatible isomorphism;
		\item $\FSETVLAONE{S}$ equals its unital vector lattice subalgebra that is generated $j(S)$;
		\item the map $j$ is injective.
	\end{enumerate}
\end{theorem}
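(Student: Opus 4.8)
The plan is to derive the theorem from \cref{res:free_algebra_with_relations_exists} after using \cref{res:VLAONE_is_equational_class} to present $\VLAONE$ as an equational class, and then to read off the three supplementary items from facts already in place.

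First I would fix the type $\type$ that underlies \cref{res:unital_vla_as_abstract_algebra}: $\mathcal F$ carries symbols for the constants $0$ and $1$, for the binary $\oplus$, the unary $\ominus$, a unary $m_\lambda$ for each $\lambda\in\RR$, the binary $\odot$, and the binary $\alwedge$ and $\alvee$, with $\rho$ recording the evident arities (so $\mathcal F$ is uncountable, which is harmless). Each of the identities (1)--(20) in \cref{res:unital_vla_as_abstract_algebra} involves only finitely many variables, so, after renaming those variables as an initial segment of the fixed countable set $\infset$, it yields one or two pairs of terms in $\termalginfset\times\termalginfset$; let $\Sigma$ be the set of all pairs so obtained. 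By \cref{res:VLAONE_is_equational_class}, the category $\VLAONE$ is isomorphic to $\ALGRHOSIGMA$ by an isomorphism that keeps the underlying sets and keeps the maps that are the morphisms. Because the isomorphism is the identity on this underlying data, it carries a free object of one category over $S$ to a free object of the other over $S$. It therefore suffices to invoke \cref{res:free_algebra_with_relations_exists}, which provides $\FSETALGRHOSIGMA{S}=\termalgS/\theta$ for the appropriate congruence relation $\theta$; transporting this back along the isomorphism gives a pair $(j,\FSETVLAONE{S})$ with the asserted universal property.

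It remains to record the three additional properties. Item (1) is the general uniqueness statement for free objects established immediately after \cref{def:free_object}. Item (2) is an instance of \cref{rem:generating}: the unital vector lattice subalgebra of $\FSETVLAONE{S}$ generated by $j(S)$, together with the restricted factoring homomorphisms, again has the universal property, so by item (1) it coincides with $\FSETVLAONE{S}$. For item (3), by \cref{rem:embedding} it suffices to exhibit one object $O_2$ of $\VLAONE$ and one injective map from $S$ to $O_2$; composing the injections supplied by parts \partref{res:embedding_1}, \partref{res:embedding_2}, and \partref{res:embedding_3} of \cref{res:embeddings} gives an injective map from $S$ into a commutative vector lattice algebra with a positive identity element, and the latter is in particular an object of $\VLAONE$. (Equivalently, $\VLAONE$ contains objects with more than one element, so the remark following \cref{res:free_algebra_with_relations_exists} applies.)

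I do not expect a genuine obstacle here: the argument is essentially bookkeeping within the machinery already developed. The one step that deserves a moment's care is the passage, in the second paragraph, from the identities (1)--(20)\textemdash stated as universally quantified equations in finitely many free variables\textemdash to honest elements of $\termalginfset\times\termalginfset$; but this is precisely the translation that the definition of satisfaction of an identity, together with the discussion leading up to it, was set up to make rigorous.
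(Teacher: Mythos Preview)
Your proposal is correct and matches the paper's approach essentially verbatim: the paper also derives the existence by combining \cref{res:VLAONE_is_equational_class} with \cref{res:free_algebra_with_relations_exists}, and then says only that ``the additional properties under (1), (2), and (3) follow from general principles.'' Your write-up is in fact more explicit than the paper's, since you spell out which general principles (the uniqueness discussion after \cref{def:free_object}, \cref{rem:generating}, and \cref{rem:embedding} together with \cref{res:embeddings} or the remark after \cref{res:free_algebra_with_relations_exists}) are being invoked for each of the three items.
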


For precisely the same reason\textemdash the existence of a category isomorphism with an equational class\textemdash it is clear that free vector spaces over non-empty sets exist (which can be seen much easier, of course), as do free vector lattices, free vector lattice algebras, and free vector lattice algebras with a positive identity element.

Likewise, the combination of \cref{res:two_types_of_lattices_are_equivalent} and \cref{res:free_algebra_with_relations_exists} shows that free lattices over non-empty sets exist. Adding the distributive laws to the identities in \cref{res:two_types_of_lattices_are_equivalent} shows that free distributive lattices over non-empty sets also exist.

Let us return to our original chain of categories
\begin{equation}\label{eq:chain_of_categories_again}
\SET \supset \VS \supset \VL \supset \VLA \supset \VLAONE \supset \VLAONEPOS.
\end{equation}
There are 15 instances of a category and a subcategory associated with this chain. For 5 of these, the ones  where $\SET$ has a subcategory, we know that there are always free objects of the subcategory because of the general theorem for equational classes.
How about the remaining 10?

For example, given a vector space $V$, do there exist a vector lattice $\FVSVL{V}$ and a linear map $j: V\to \FVSVL{V}$ with the property that, for every linear map $\varphi: E\to F$ from $E$ into a vector lattice $F$, there exists a unique vector lattice homomorphism $\fact{\varphi}:\FVSVL{V}\to F$ such that the diagram
\begin{equation*}
\begin{tikzcd}
V\arrow[r, "j"]\arrow[dr, "\varphi"]& \FVSVLA{V}\arrow[d, "\fact{\varphi}"]
\\	& F
\end{tikzcd}
\end{equation*}
is commutative?  As another example, given a vector lattice $E$, do there exist a unital vector lattice algebra $\FVLVLAONEPOS{E}$ with a positive identity element and a vector lattice homomorphism $j: E\to \FVSVLAONEPOS{E}$ with the property that, for every vector lattice homomorphism $\varphi: E\to \vlaonepos$ from $E$ into a vector lattice algebra $\vlaonepos$ with a positive identity element, there exists a unique unital vector lattice algebra homomorphism $\fact{\varphi}$ such that the diagram
\begin{equation*}
\begin{tikzcd}
E\arrow[r, "j"]\arrow[dr, "\varphi"]& \FVSVLAONEPOS{V}\arrow[d, "\fact{\varphi}"]\\ & \vlaonepos
\end{tikzcd}
\end{equation*}
is commutative?

As we shall see, the existence of all of these `missing' 10 free objects can be derived from the existence of $\FSETVLAONE{S}$ for non-empty sets $S$. We shall, in fact, also use this basic existence result to derive the existence of $\FSETVS{S}$, $\FSETVL{S}$, $\FSETVLA{S}$, and $\FSETVLAONEPOS{S}$ once more, even though we had already observed this to be a consequence of the general result for equational classes. The methods that are used below to obtain 14 other existence results from a basic one for a free object with `maximal' structure, can undoubtedly be formulated in general in terms of abstract algebras, their reducts (see \cite[p.7]{bergman_UNIVERSAL_ALGEBRA:2012}) and forgetful functors, inclusion of congruences relations and the general Second Isomorphism Theorem (see \cite[Theorem~3.5]{bergman_UNIVERSAL_ALGEBRA:2012}). We believe, however, that this would actually obscure the picture for the concrete cases we have in mind. Our approach, which is a very simple combination of passing to quotients and sub-objects, also leads to an overview of the (quite natural) relations between the various free objects of our interest; see \cref{res:overview_algebraic}, below. This overview would presumably be a little less obvious when using a more general abstract approach.

In order to solve the 14 remaining universal problems, we first make sure that free unital vector lattice algebras and free unital vector lattice algebras with positive identity elements over non-empty sets, over vector spaces, over vector lattices, and over vector lattice algebras all exist.  These are 8 universal problems in all.

The first batch of 4 free objects consists of the free unital vector lattice algebras in this list. We already have $\FSETVLAONE{S}$ for a non-empty set. This will be our starting point to construct free unital vector lattice algebras over vector spaces. To this end, let $V$ be a vector space. We let $\mathrm{Set\,} V$ be the underlying set of $V$ and take a pair $(j,\FSETVLAONE{\mathrm{Set\,} V})$. Suppose that $\varphi: V\to\vlaone$ is a linear map from $V$ into a unital vector lattice algebra $\vlaone$.  There exists a unique unital vector lattice algebra homomorphism $\fact{\varphi}:\FSETVLAONE{\mathrm{Set\,}V}\to \vlaone$ such that $\fact{\varphi}\circ j=\varphi$. For $x,y\in V$, we have, since $\varphi$ is actually linear, that
\begin{align*}
\fact{\varphi}(j(x+y)-j(x)-j(y))&=\fact{\varphi}(j(x+y))-\fact{\varphi}(j(x))-\fact{\varphi}(j(y))\\
&=\varphi(x+y)-\varphi(x)-\varphi(y)\\&=0.
\end{align*}
Likewise, one sees that $\fact{\varphi}(j(\lambda x)-\lambda j(x))=0$ for all $\lambda\in\RR$ and $x\in V$.
Hence $\fact{\varphi}$ vanishes on the bi-ideal $I$ of $\FSETVLAONE{\mathrm{Set\,}V}$ that is generated by the elements $j(x+y)-j(x)-j(y)$ for $x,y\in V$ and the elements  $j(\lambda x)-\lambda j(x)$ for $\lambda\in\RR$ and $x\in V$. Consider the quotient $\FSETVLAONE{\mathrm{Set\,} V}/I$, which is a unital vector lattice algebra, and let $q_I:\FSETVLAONE{\mathrm{Set\,}V}\to\FSETVLAONE{\mathrm{Set\,}V}/I$ be the quotient map. There exists a unique unital vector lattice homomorphism $\factfact{\varphi}: \FSETVLAONE{\mathrm{Set\,}V}/I\to \vlaone$ such that $\fact{\varphi}=\factfact{\varphi}\circ q_I$. Hence we have a commutative diagram
\begin{equation*}
\begin{tikzcd}
V\arrow[r, "j"]\arrow[dr,"\varphi"]& \FSETVLAONE{\mathrm{Set\,}V}\arrow[r,"q_I"]\arrow[d, "\fact{\varphi}"]&\FSETVLAONE{\mathrm{Set\,}V}/I\arrow[dl,"\factfact{\varphi}"]\\
&A&
\end{tikzcd}
\end{equation*}
Then $\factfact{\varphi}\circ (q_I\circ j)=\varphi$. Since $j(S)$ generates $\FSETVLAONE{\mathrm{Set\,}V}$ as a unital vector lattice algebra, $(q_I\circ j)(S)$ generates $\FSETVLAONE{\mathrm{Set\,V}}/I$ as a unital vector lattice algebra. This shows that the unital vector lattice homomorphism $\factfact{\varphi}$ is uniquely determined by the requirement that  $\factfact{\varphi}\circ(q_I\circ j)=\varphi$. Since, furthermore, $q_I\circ j: V\to \FSETVLAONE{\mathrm{Set\,}V}/I$ is linear, we see that the pair $(q_I\circ j, \FSETVLAONE{\mathrm{Set\,}V}/I)$ solves the problem of finding a free unital vector lattice algebra $\FVSVLAONE{V}$ over the vector space $V$.

The same method yields a free unital vector lattice algebra $\FVLVLAONE{E}$ over a vector lattice $E$. One starts with a pair $(j,\FSETVLAONE{\mathrm{Set\,}E}$, and lets $I$ be the bi-ideal in $\FSETVLAONE{\mathrm{Set\,}E}$ that is generated by the elements $j(x+y)-j(x)-j(y)$ for all $x,y\in E$, the elements $j(\lambda x)-\lambda j(x)$ for all $\lambda\in\RR$ and $x\in E$, and now also the elements $j(x\vee y)-j(x)\vee j(y)$ for all $x,y\in E$. The pair $(q_I\circ j, \FSETVLAONE{\mathrm{Set\,}E}/I)$ then solves the problem of finding a free unital vector lattice algebra $\FVLVLAONE{E}$ over the vector lattice $E$.

In order to obtain a free unital vector lattice algebra over a vector lattice algebra $\vla$, one includes the previous three classes of elements into the generating set of the ideal $I$ of $\FSETVLAONE{\mathrm{Set\,}A}$, and now also adds the elements $j(xy)-j(x)j(y)$ for all $x,y\in A$. The pair $(q_I\circ j, \FSETVLAONE{\mathrm{Set\,}A}/I))$ then  solves the problem of finding a free unital vector lattice algebra  $\FVLAVLAONE{\vla}$ over the vector lattice algebra $\vla$.

The second batch of 4 free objects from the list above consists of the free unital vector lattice algebras with positive identity elements over sets, over vector spaces, over vector lattices, and over unital vector lattice algebras.

Let $S$ be a non-empty set. Take a pair $(j,\FSETVLAONE{S})$. Let $I$ be the bi-ideal in $\FSETVLAONE{S}$ that is generated by $(\abs{1}-1)$, and let $q_     I:\FSETVLAONE{S}\to\FSETVLAONE{S}/I$ be the quotient map. Suppose that $\varphi: S\to \vlaonepos$ is a map from $S$ into a unital vector lattice algebra with positive identity element $\vlaonepos$. There exists a unique unital vector lattice algebra homomorphism $\fact{\varphi}: \FSETVLAONE{S}$ such that $\fact{\varphi}\circ j=\varphi$. Since the identity element in $\vlaonepos$ is positive, $\fact{\varphi}$ vanishes on $I$. Hence there exists a unique unital vector lattice algebra homomorphism $\factfact{\varphi}:\FSETVLAONE{S}/I\to \vlaonepos$ such that $\fact{\varphi}=\factfact{\varphi}\circ q_I$. Then $\FSETVLAONE{S}/I$ is a unital vector lattice algebra with positive identity element, and the pair $(q_I\circ j,\FSETVLAONE{S}/I)$ solves the problem of finding a free unital vector lattice algebra with positive identity element $\FSETVLAONEPOS{S}$ over the non-empty set $S$.

Analogously to this, one can, for a vector space $V$,  obtain $\FVSVLAONEPOS{V}$ as a quotient of $\FVSVLAONE{V}$ that we had already obtained. For a vector lattice $E$, $\FVLVLAONEPOS{E}$ is a quotient of $\FVSVLAONE{E}$ and, for a vector lattice algebra $\vla$, $\FVLAVLAONEPOS{\vla}$ is a quotient of $\FVLAVLAONE{\vla}$.

Alternatively, one can proceed as earlier, but now with the free unital vector lattice algebra with a positive identity element over a set as a starting point, instead of a free unital vector lattice algebra. For a vector space $V$, $\FVSVLAONEPOS{V}$ is then obtained as a quotient of $\FSETVLAONEPOS{\mathrm{Set\,}V}$;  for a vector lattice $E$, $\FVLVLAONEPOS{E}$ is then obtained as a quotient of $\FSETVLAONEPOS{\mathrm{Set\,}E}$; and, for a vector lattice algebra $\vla$, $\FVLAVLAONEPOS{\vla}$ is then obtained as a quotient of $\FSETVLAONEPOS{\mathrm{Set\,}A}$.

We have now completed our first task of obtaining all 8 universal objects in the above list. What about the remaining 7? As it turns out, it is easy to locate these. For this, we use \cref{res:embeddings}, which is concerned with the `reversal of directions' in our chain of categories in \cref{eq:chain_of_categories_again}. It enables us to locate the remaining 7 free objects.

Let $S$ be a non-empty set. Take a pair $(j,\FSETVLAONE{S})$. Suppose that $\vla$ is a vector lattice algebra, and that $\varphi:S\to \vla$ is a map. \cref{res:embeddings} shows that we may view $\vla$ as a vector lattice subalgebra of a unital vector lattice algebra $\vlaone$. After doing this, there exists a unique unital vector lattice algebra homomorphism $\fact{\varphi}:\FSETVLAONE{S}\to\vlaone$ such that $\varphi=\fact{\varphi}\circ j$.  Since $\fact{\varphi}$ maps $j(S)$ into the vector lattice subalgebra $\vla$ of $\vlaone$, this is also the case for the vector lattice subalgebra of $\FSETVLAONE{S}$ that is generated by $j(S)$. Hence this vector lattice subalgebra solves the problem of finding a free vector lattice algebra $\FSETVLA{S}$ over the non-empty set $S$.

Let $S$ be a non-empty set. Since \cref{res:embeddings} implies that every vector lattice can be viewed as a vector sublattice of a unital vector lattice algebra, a similar argument shows that $\FSETVL{S}$ is the vector sublattice of $\FSETVLAONE{S}$ that is generated by $j(S)$.

Let $S$ be a non-empty set. Since \cref{res:embeddings} implies that every vector space can be viewed as a vector subspace of a unital vector lattice algebra, it is now clear that $\FSETVS{S}$ is the vector subspace of $\FSETVLAONE{S}$ that is generated by $j(S)$.

We have thus taken care of another 3 free objects. Before proceeding, we note that \cref{res:embeddings} shows that non-empty sets, vector spaces, vector lattices, and unital vector lattice algebras can all be found inside some unital vector lattice algebra with a positive identity element. A similar argument, therefore, shows that, for a non-empty set $S$, each of $\FSETVLA{S}$, $\FSETVL{S}$, and $\FSETVS{S}$ can also be found inside $\FSETVLAONEPOS{S}$.

Let $V$ be a vector space. We have already shown that $\FVSVLAONE{V}$ exists. Using \cref{res:embeddings} again, it is then easily seen that $\FVSVLA{V}$ exists, and that it is the vector lattice subalgebra of $\FVSVLAONE{V}$ that is generated by $j(V)$. Likewise, $\FVSVL{V}$ is the vector sublattice of $\FVSVLAONE{V}$ that is generated by $j(V)$. We have thus found another 2 free objects. Before proceeding, we note that, for a vector space $V$,  $\FVSVLA{V}$ and $\FVSVLAONE{V}$ can both also be found inside $\FVSVLAONEPOS{V}$ again.

Let $E$ be a vector lattice. Using \cref{res:embeddings} again, we see that $\FVLVLA{E}$ exists and is equal to the vector lattice subalgebra of $\FVLVLAONE{E}$ that is generated by $j(E)$. We have now covered 14 free objects. Before proceeding, we note that $\FVLVLA{E}$ is also equal to the vector lattice subalgebra of $\FVLVLAONEPOS{E}$ that is generated by $j(E)$.

Finally, let $\vlaone$ be a unital vector lattice algebra. Let $I$ be the bi-ideal in $\vlaone$ that is generated by $(\abs{1}-1)$. It is then obvious that $\FVLAONEVLAONEPOS{\vlaone}$ exists and is simply $\vlaone/I$. We can also write this as $\FVLAONEVLAONE{\vlaone}/I$, thus making clear that this is completely analogous to the way how, for example, $\FVLVLAONEPOS{E}$ can be obtained as a quotient of $\FVLVLAONE{E}$ for a vector lattice $E$.

In the above, we have not mentioned the accompanying maps $j$ being injective or not. It is easy to see, using \cref{rem:embedding} and \cref{res:embeddings}, that these 15 maps are all injective.

We collect what we have found in the following theorem.

\begin{theorem}\label{res:overview_algebraic}
	For a non-empty set $S$, a vector space $V$, a vector lattice $E$, a vector lattice algebra $\vla$, and a unital vector lattice algebra $\vlaone$, the 15 free objects below all exist. There are inclusions as indicated. The surjective unital vector lattice algebra homomorphisms as indicated are the quotient maps corresponding to dividing out the bi-ideal that is generated by $(\abs{1}-1)$.
	\begin{equation*}
	\begin{tikzcd}[column sep=0.45em, row sep=-5pt]
	&&                      &                                             & \FSETVLAONE{S}\ar[dd, two heads]\\
	\,\,S\arrow[Subseteq]{r}{}&\FSETVS{S}\arrow[Subseteq]{r}{}&\FSETVL{S}\arrow[Subseteq]{r}{}&\FSETVLA{S}\arrow[Subseteq]{dr}{}\arrow[Subseteq]{ur}{}& \\
	&&                      &                                             &\FSETVLAONEPOS{S}\\
	\end{tikzcd}
	\end{equation*}
	\begin{equation*}
	\begin{tikzcd}[column sep=0.45em, row sep=-5pt]
	&&                      &                                             & \FVSVLAONE{V}\ar[dd, two heads]\\
	\phantom{\FSETVS{S}}&V\arrow[Subseteq]{r}{}&\FVSVL{V}\arrow[Subseteq]{r}{}&\FVSVLA{V}\arrow[Subseteq]{dr}{}\arrow[Subseteq]{ur}{}& \\
	&&                      &                                             &\FVSVLAONEPOS{V}\\
	\end{tikzcd}
	\end{equation*}
	\begin{equation*}
	\begin{tikzcd}[column sep=0.45em, row sep=-5pt]
	&&                      &                                             & \FVLVLAONE{E}\ar[dd, two heads]\\
	\phantom{\FSETVS{S}}&\phantom{\FVSVL{V}\,\,}&E\arrow[Subseteq]{r}{} &\FVLVLA{E}\arrow[Subseteq]{dr}{}\arrow[Subseteq]{ur}{}& \\
	&&                      &                                             &\FVLVLAONEPOS{E}\\
	\end{tikzcd}
	\end{equation*}
	\begin{equation*}
	\begin{tikzcd}[column sep=0.45em, row sep=-5pt]
	&&                      &                                             & \FVLAVLAONE{\vla}\ar[dd, two heads]\\
	\phantom{\FSETVS{S}}&\phantom{\FVSVL{V}}&\phantom{\FVSVL{V}\quad} &A\arrow[Subseteq]{dr}{}\arrow[Subseteq]{ur}{}& \\
	&&                      &                                             &\FVLAVLAONEPOS{\vla}\\
	\end{tikzcd}
	\end{equation*}
	\begin{equation*}
	\begin{tikzcd}[column sep=0.45em, row sep=-5pt]
	&&                      &                                             & \vlaone=\FVLAONEVLAONE{\vlaone}\ar[dd, two heads]\\
	\phantom{\FSETVS{S}}&\phantom{\FVSVL{V}}&\phantom{\FVSVL{V}} &\phantom{A}& \\
	&&                      &                                             &\FVLAONEVLAONEPOS{\vlaone}\\
	\end{tikzcd}
	\end{equation*}	
\end{theorem}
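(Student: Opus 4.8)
The plan is to read this theorem as a compendium of the constructions carried out in this section and to assemble its proof by bootstrapping from a single substantive existence result, deriving the remaining fourteen free objects by two elementary operations: passing to a quotient and passing to a generated sub-object. Concretely, one starts from \cref{res:FSETVLAONE_exists}, which supplies, for a non-empty set $S$, the free unital vector lattice algebra $\FSETVLAONE{S}$ together with an injective map $j$ whose image generates it as a unital vector lattice algebra; this is the only point at which the universal-algebra machinery, that is, the category isomorphism of \cref{res:VLAONE_is_equational_class} and the existence theorem \cref{res:free_algebra_with_relations_exists}, is invoked. Everything after that is bookkeeping inside the six categories of the chain \cref{eq:chain_of_categories_again}.

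First I would construct the unital vector lattice algebras in the right-hand columns of the displayed diagrams, namely $\FVSVLAONE{V}$, $\FVLVLAONE{E}$, and $\FVLAVLAONE{\vla}$, as quotients of $\FSETVLAONE{\mathrm{Set}\,V}$, $\FSETVLAONE{\mathrm{Set}\,E}$, and $\FSETVLAONE{\mathrm{Set}\,A}$ by the bi-ideal $I$ generated by, respectively, the elements $j(x+y)-j(x)-j(y)$ and $j(\lambda x)-\lambda j(x)$; those together with $j(x\vee y)-j(x)\vee j(y)$; and those together with $j(xy)-j(x)j(y)$. The crux in each case is that, for a morphism $\varphi$ of the appropriate kind into a unital vector lattice algebra $\vlaone$, the factoring homomorphism $\fact{\varphi}$ vanishes on the prescribed generators, hence on $I$, and so descends; the quotient of a unital vector lattice algebra by a bi-ideal is again a unital vector lattice algebra, and a generation argument as in \cref{rem:generating} yields uniqueness of the descended map. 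The positive-identity companions $\FSETVLAONEPOS{S}$, $\FVSVLAONEPOS{V}$, $\FVLVLAONEPOS{E}$, $\FVLAVLAONEPOS{\vla}$ are then obtained by one further quotient, by the bi-ideal generated by $(\abs{1}-1)$, equivalently by rerunning the previous step with $\FSETVLAONEPOS{\mathrm{Set}\,\cdot}$ in place of $\FSETVLAONE{\mathrm{Set}\,\cdot}$; these quotient maps are precisely the two-headed arrows in the diagrams, and in the degenerate case of a unital vector lattice algebra $\vlaone$ the construction collapses to $\FVLAONEVLAONEPOS{\vlaone}=\vlaone/I=\FVLAONEVLAONE{\vlaone}/I$.

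Next I would produce the free objects of lower structure, namely $\FSETVS{S}$, $\FSETVL{S}$, $\FSETVLA{S}$, $\FVSVL{V}$, $\FVSVLA{V}$, $\FVLVLA{E}$, as the sub-objects of the corresponding free unital vector lattice algebra generated by the image of $j$. Here the essential input is \cref{res:embeddings}, which embeds any vector space, vector lattice, or vector lattice algebra into a unital vector lattice algebra: to treat, say, $\varphi\colon S\to\vla$, one embeds $\vla$ into some $\vlaone$, factors through $\FSETVLAONE{S}$, and observes that $\fact{\varphi}$ carries $j(S)$, hence the generated vector lattice subalgebra, into $\vla$; uniqueness is automatic because that subalgebra is generated by $j(S)$. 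The inclusions drawn in the diagrams are exactly these sub-object inclusions, and one checks along each column (again via \cref{res:embeddings} and the same generation argument) that each free object of lower structure sits inside both the plain and the positive-identity unital algebra above it. Finally, injectivity of all fifteen accompanying maps follows from \cref{rem:embedding} together with \cref{res:embeddings}, since in each case there is an injective morphism from the base object into an object of the pertinent category.

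The step I expect to demand the most care is not any single construction but the consistency bookkeeping: verifying that each quotient of a unital vector lattice algebra by a bi-ideal is again an object of the intended category carrying the asserted universal property; that the generated sub-objects inherit that property, uniqueness included; and that the fifteen objects assemble into the five displayed diagrams with the two-headed arrows being exactly the $(\abs{1}-1)$-quotient maps. None of these verifications is deep, each being a variant of the two cases $\FVSVLAONE{V}$ and $\FSETVLAONEPOS{S}$ already spelled out above, but there are many of them, and it is in keeping track of all the compatibilities across the chain \cref{eq:chain_of_categories_again} that the bulk of the work lies.
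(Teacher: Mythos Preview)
Your proposal is correct and follows essentially the same approach as the paper: the theorem is indeed assembled from the preceding discussion in Section~6, bootstrapping everything from \cref{res:FSETVLAONE_exists} via the two operations you name\textemdash quotienting by appropriate bi-ideals (including the one generated by $(\abs{1}-1)$) and passing to generated sub-objects with the aid of \cref{res:embeddings}\textemdash and deducing injectivity from \cref{rem:embedding} together with \cref{res:embeddings}. Your organisation and choice of representative cases ($\FVSVLAONE{V}$ and $\FSETVLAONEPOS{S}$) match the paper's exactly.
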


\begin{remark}
	For a vector lattice algebra $\vla$, $\FVLAVLAONE{\vla}$ is what deserves to be called the unitisation of $\vla$.
\end{remark}

\begin{remark}
	One can also ask for free \emph{commutative} vector lattice algebras, for free \emph{commutative} unital vector lattice algebras, and for free \emph{commutative} unital vector lattice algebras with a positive identity element. These can be obtained by taking the general free object and dividing out the bi-ideal that is generated by the elements $(j(x)j(y)-j(y)j(x))$ for all $x,y$ in the starting object. Using \cref{res:embeddings}, it is immediate that sets, vector spaces, and vector lattices still embed into these new free objects under the new map $j$, which is the composition of the quotient map and the original map $j$. Vector lattice algebras, however, embed precisely when they are commutative.
\end{remark}

\begin{remark}
	\cref{rem:composition} shows how compositions behave. For example, a free vector lattice algebra over a free vector lattice over a non-empty set $S$ is a free vector lattice algebra over $S$.
\end{remark}

\begin{remark}\label{rem:archimedean_free_object}
	There are 14 free objects in \cref{res:overview_algebraic} that are vector lattices or vector lattice algebras. These correspond to the 14 occurrences of a category and a subcategory in the chain in \cref{eq:chain_of_categories_again} where the subcategory consists of lattices. For each of these 14 occurrences, one can define a new subcategory by considering only Archimedean objects of the original subcategory. One can then ask for a free object in that context, i.e., ask for an Archimedean object of the original subcategory that has the universal property for all morphisms from the initial object of the category into Archimedean objects of the original subcategory. Using \cite[Theorem~60.2]{luxemburg_zaanen_RIESZ_SPACES_VOLUME_I:1971}, it is easy to see that the Archimedean free object is obtained from the general one in \cref{res:overview_algebraic} by taking its quotient with respect to the uniform closure of $\{0\}$.
	
	The general free (lattice) object in \cref{res:overview_algebraic} is sometimes already Archi\-me\-dean because it can be realised as a lattice of real-valued functions. This is the case for the free vector lattice $\FSETVL{S}$ over a non-empty set $S$ (see \cite{bleier:1973}) and for the free vector lattice $\FVSVL{V}$ over a vector space $V$ (this can be inferred from \cite[Theorem~3.1]{troitsky:2019}). We do not know whether any other of the remaining 12 is already Archimedean or not.
\end{remark}

\section{Free objects over lattices}\label{sec:free_objects_over_lattices}

\noindent The free objects obtained above, and the two methods of obtaining new ones that were used above (passing to quotients and sub-objects), can also be used in other context. As an example, consider the chain of categories
\[
\SET\supset\LAT\supset\VL\supset\VLA\supset\VLAONE\supset\VLAONEPOS.
\]
We have already observed that the free lattice over a non-empty subset exists, as a consequence of \cref{res:free_algebra_with_relations_exists}. How about the other four existence problems that are not related to the original chain in \cref{eq:chain_of_categories}? We shall now show that the corresponding free objects all exist, as an easy consequence of our previous work. Suppose that $L$ is a (not necessarily distributive) algebraic lattice. Take $\FSETVLAONE{\mathrm{Set}\,L}$, and let $I$ be the bi-ideal in $\FSETVLAONE{\mathrm{Set}\,L}$ that is generated by the elements $(j(x\wedge y)-j(x)\wedge j(y))$ and $(j(x\vee y)-j(x)\vee j(y))$ for all $x,y\in L$. Then $\FSETVLAONE{\mathrm{Set}\,L}/I$  is the free unital vector lattice algebra $\FLATVLAONE{L}$ over $L$. The quotient of  $\FLATVLAONE{L}$ modulo the bi-ideal that is generated by $(\abs{1}-1)$ is the free unital vector lattice algebra with a positive identity element $\FLATVLAONEPOS{L}$ over $L$.	Using \cref{res:embeddings}, one sees that $\FLATVL{L}$ and $\FLATVLA{L}$ are the vector sublattice, resp.\ the vector lattice subalgebra, of $\FLATVLAONE{S}$ (and also of $\FLATVLAONEPOS{S}$) that is generated by $j(L)$.

It is not always the case that the maps $j:L\to\FLATVL{L}$, $j:L\to\FLATVLA{L}$, $j: L\to\FLATVLAONE{L}$, or $j:L\to\FLATVLAONEPOS{L}$ are injective. If one of these is injective, then $L$ must be distributive. Conversely, suppose that $L$ is distributive. Then $L$ is isomorphic to a sublattice of the power set of some set $X$; see \cite[Corollary~2.42]{bergman_UNIVERSAL_ALGEBRA:2012} or \cite[Theorem~119]{gratzer_LATTICE_THEORY_FOUNDATION:2011}, for example. Passing to characteristic functions, we see that a distributive lattice $L$ is isomorphic to a sublattice of the lattice of real-valued bounded functions on $X$.  As earlier, since the real-valued bounded functions on $X$ form a unital vector lattice algebra with a positive identity element, the existence of this single (non-trivial) embedding of a distributive lattice $L$ already implies that all four maps $j$ are injective.
Thus we have shown the following.

\begin{theorem}\label{res:free_objects_over_lattices}
	Let $L$ be a \uppars{partially ordered or algebraic} lattice. Then the 4 free objects in
	
	\begin{equation*}
	\begin{tikzcd}[column sep=1.5em, row sep=-5pt]
	&&                      &                                             & \FLATVLAONE{L}\ar[dd, two heads]\\
	&L\ar[r, "j"]&\FLATVL{L}\arrow[Subseteq]{r}{}&\FLATVLA{V}\arrow[Subseteq]{dr}{}\arrow[Subseteq]{ur}{}& \\
	&&                      &                                             &\FLATVLAONEPOS{L}\\
	\end{tikzcd}
	\end{equation*}
	all exist. The map $j$ is injective if and only if $L$ is distributive.
\end{theorem}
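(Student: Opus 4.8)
The plan is to harvest these four free objects from the single existence result $\FSETVLAONE{S}$ of \cref{res:FSETVLAONE_exists} by the two moves used throughout \cref{sec:free_vector_lattices_and_free_vector_lattice_algebras}: dividing out a bi-ideal of relators, and then passing to a sub-object generated by the image of the base. By \cref{res:LAT_is_an_equational_class} the categories of partially ordered lattices and of algebraic lattices are isomorphic, with underlying sets and morphisms unchanged, so I may and shall work with $L$ presented as an algebraic lattice $(L,\alwedge,\alvee)$; nothing below depends on the choice.

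First, the existence of $\FLATVLAONE{L}$. Take a pair $(j,\FSETVLAONE{\mathrm{Set}\,L})$ and let $I$ be the bi-ideal of $\FSETVLAONE{\mathrm{Set}\,L}$ generated by the elements $j(x\alwedge y)-j(x)\wedge j(y)$ and $j(x\alvee y)-j(x)\vee j(y)$ for $x,y\in L$, with quotient map $q_I\colon\FSETVLAONE{\mathrm{Set}\,L}\to\FSETVLAONE{\mathrm{Set}\,L}/I$. Given a lattice homomorphism $\varphi\colon L\to\vlaone$ into a unital vector lattice algebra, the induced homomorphism $\fact{\varphi}$ sends each generator of $I$ to $\varphi(x\alwedge y)-\varphi(x)\wedge\varphi(y)=0$ (and similarly for $\alvee$), precisely because $\varphi$ preserves the lattice operations; hence $\fact{\varphi}$ kills $I$ and factors uniquely through $q_I$, and since $(q_I\circ j)(L)$ generates $\FSETVLAONE{\mathrm{Set}\,L}/I$ as a unital vector lattice algebra, this factoring map is the unique one over $q_I\circ j$. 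So $(q_I\circ j,\FSETVLAONE{\mathrm{Set}\,L}/I)$ is $\FLATVLAONE{L}$. Dividing $\FLATVLAONE{L}$ by the bi-ideal generated by $\abs{1}-1$ then yields $\FLATVLAONEPOS{L}$, verbatim as in the set and vector-space cases treated earlier, and gives the surjection drawn in the diagram. For $\FLATVL{L}$ and $\FLATVLA{L}$ I would use \cref{res:embeddings}, parts \partref{res:embedding_3} and \partref{res:embedding_4}: a vector lattice (resp.\ vector lattice algebra) receiving a lattice homomorphism from $L$ embeds into a unital vector lattice algebra with a positive identity, so the corresponding factoring map through $\FLATVLAONE{L}$ (or through $\FLATVLAONEPOS{L}$) maps $j(L)$ into that sub-object; the vector sublattice (resp.\ vector lattice subalgebra) of $\FLATVLAONE{L}$ generated by $j(L)$ then has the required universal property, which simultaneously establishes the indicated inclusions.

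For the injectivity dichotomy: if any one of the four maps $j$ is injective, then it identifies $L$ with a sublattice of the corresponding free object, which, being a vector lattice, is a distributive lattice; since a sublattice of a distributive lattice is distributive, $L$ is distributive. Conversely, if $L$ is distributive, the representation theorem for distributive lattices (\cite[Corollary~2.42]{bergman_UNIVERSAL_ALGEBRA:2012} or \cite[Theorem~119]{gratzer_LATTICE_THEORY_FOUNDATION:2011}) exhibits $L$ as a sublattice of $\mathcal P(X)$ for some set $X$; passing to characteristic functions embeds $L$, as a lattice, into the lattice of bounded real-valued functions on $X$, which is at once an object of $\VL$, $\VLA$, $\VLAONE$, and $\VLAONEPOS$. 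By \cref{rem:embedding}, a single such injective morphism forces the accompanying map $j$ to be injective in all four cases.

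I do not expect a genuine obstacle here: this is a harvest statement, and every step replays a technique already in the paper. The one point deserving a moment's care is confirming that the bi-ideal $I$ for $\FLATVLAONE{L}$ is generated by exactly the relators whose vanishing is equivalent to a factoring unital vector lattice algebra homomorphism being a $\LAT$-morphism on $j(L)$ — that is, that ``preserving $\alwedge$ and $\alvee$'' is captured by those two families and nothing more — after which the universal-property bookkeeping is formal, exactly as in \cref{sec:free_vector_lattices_and_free_vector_lattice_algebras}.
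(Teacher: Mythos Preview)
Your proposal is correct and follows essentially the same approach as the paper: construct $\FLATVLAONE{L}$ as the quotient of $\FSETVLAONE{\mathrm{Set}\,L}$ by the bi-ideal generated by the lattice relators, obtain $\FLATVLAONEPOS{L}$ by further dividing out $(\abs{1}-1)$, locate $\FLATVL{L}$ and $\FLATVLA{L}$ as sub-objects via \cref{res:embeddings}, and handle the injectivity dichotomy by the distributive-lattice representation theorem combined with \cref{rem:embedding}. The paper's argument is slightly terser but identical in substance, including the same references for the representation of distributive lattices.
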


\begin{remark}
	The above argument linking the injectivity of $j$ to the distributivity of $L$ is taken from \cite[proof of Proposition~3.1]{aviles_rodriguez-abellan:2019}, where it was used in the context of free Banach lattices over a lattice.  The bounded real-valued functions on $X$, when supplied with the supremum norm, also form a unital Banach lattice algebra with a positive identity element. The fact that the distributive lattice $L$ embeds into its unit ball will have consequences for the injectivity of the maps $j$ when considering free Banach lattice algebras over distributive lattices.
\end{remark}

\begin{remark}
	As in \cref{rem:archimedean_free_object}, a free Archimedean object can be obtained by taking the quotient of a general free object in \cref{res:free_objects_over_lattices} with respect to the uniform closure of $\{0\}$. It can be inferred from \cite[Theorem~2.1]{aviles_rodriguez-abellan:2019} that the free vector lattice $\FLATVL{L}$ over a (not necessarily distributive) lattice $L$ can be realised as a vector lattice of real-valued functions. Hence it is Archimedean. We do not know whether any of the other three free objects in \cref{res:free_objects_over_lattices} is already Archimedean or not.
\end{remark}

\section{Free $\!f\!$-algebras}\label{sec:free_f-algebras}

\noindent We conclude with a discussion of free $\!f\!$-algebras over non-empty sets. We can neither prove nor disprove that they exist, but there are still a number of observations to be made.

We recall that a vector lattice algebra $\vla$ is a member of a family of abstract algebras, each of which is supplied with a constant 0, a binary map $\oplus$, a unary map $\ominus$, a unary map $m_\lambda$ for every $\lambda\in\RR$, a binary map $\odot$, and binary maps $\alwedge$ and $\alvee$. We let $\type$ denote the obvious underlying type of such abstract algebras. Among all abstract algebra of this type $(\mathcal F, \rho)$, we can single out the vector lattice algebras as those in which a number of identities are satisfied: they form an equational class. We have seen that this implies that free vector lattice algebras over a non-empty set exist.

How is this with $\!f\!$-algebras?
We recall that a vector lattice algebra is called an $\!f\!$-algebra if, for all $x,y\in A$ and $z\in A^+$, the fact that $x\wedge y=0$ implies that $(xz)\wedge y=(zx)\wedge y=0$. We can, therefore, also single out the $\!f\!$-algebras among all abstract algebras of type $\type$ by requiring that all the identities for vector lattice algebras be again satisfied, and requiring that this extra $\!f\!$-algebra implication be valid. Is this perhaps also an equational class? This would imply the existence of free $\!f\!$-algebras over non-empty sets. Likewise, if one can prove that the  Archimedean $\!f\!$-algebras form an equational class, then this would establish the existence of free Archimedean $\!f\!$-algebras.

For the unital case, a similar setup can be given. One starts with abstract algebras, each of which is supplied with constants 0 and now also 1,  a binary map $\oplus$, a unary map $\ominus$, a unary map $m_\lambda$ for every $\lambda\in\RR$, a binary map $\odot$, and binary maps $\alwedge$ and $\alvee$. There is an obvious underlying type again (slightly different from the previous one), and the unital vector lattice algebras are the abstract algebras of this type in which a (slightly different) number of identities are satisfied. They form an equational class and, therefore, free unital vector lattice algebras over non-empty sets exist.

How is this with unital $\!f\!$-algebras, which can be singled out as those unital vector lattice algebras where the $\!f\!$-algebra implication holds? Do they form an equational class? If so, then free unital $\!f\!$-algebras over non-empty sets exist. How about Archimedean unital $\!f\!$-algebras, unital $\!f\!$-algebras with a positive identity element, and Archimedean unital $\!f\!$-algebras with a positive element?

All in all, we have six classes of $\!f\!$-algebras that may or may not be equational classes. For three of them, we can show that they are not. The reason is that equational classes are closed under the taking of abstract homomorphic images, i.e., under the taking of vector lattice algebra homomorphic images. For the Archimedean $\!f\!$-algebras, the Archimedean unital $\!f\!$-algebras, and the Archimedean unital $\!f\!$-algebras with a positive identity, this is not the case as is demonstrated by the following example. It is based on \cite[second part of Example~60.1]{luxemburg_zaanen_RIESZ_SPACES_VOLUME_I:1971}, where it is used to show that a quotient of an Archimedean vector lattice need not be Archimedean. The particular context shows much more, however.

\begin{example}
	Under pointwise algebra operations and ordering, the sequence space $\ell^\infty$ is a unital $\!f\!$-algebra with a positive identity element. Consider the order ideal $I_u$ of $\ell^\infty$ that is generated by the element $u=(u_1,u_2,u_3\dotsc)\coloneqq (1/1^2, 1/2^2, 1/3^2,\ldots)$. Since the elements of $\ell^\infty$ are bounded,  $I_u$ is also an algebra ideal. Hence it is a bi-ideal, so that the quotient $\ell^\infty/I_u$ is a vector lattice algebra again. This quotient is not Archimedean, however. To see this, we include the short argument from  \cite[Example~60.1]{luxemburg_zaanen_RIESZ_SPACES_VOLUME_I:1971}. Let $q:\ell^\infty\to\ell^\infty/I_u$ denote the quotient map. Set $e\coloneqq (1,1,1,\dotsc)$ and $v=(v_1,v_2,v_3,\dotsc)\coloneqq (1/1,1/2,1/3,\dotsc)$. Then $q(v)\neq 0$.  Let $k$ be a positive integer. Then $v_n\leq e_n/k$ for all $n\geq k$. Hence there exists an element $x=(x_1,x_2,\dotsc)\in\ell^\infty$ such that $x_n=0$ for all $n\geq k$ and $v\leq x + e/k$. Since $x\in I_u$, this implies that $q(v)\leq q(e)/k$. Since $q(v)>0$, this shows that $\ell^\infty/I_u$ is not Archimedean.
	
	We therefore have a vector lattice algebra homomorphism (which is automatically unital by its surjectivity)  $q:\ell^\infty\to\ell^\infty/I_u$, but the codomain is not even an Archimedean vector lattice, let alone an Archimedean $\!f\!$-algebra with or without additional properties.
	
	Aside, although it is not relevant to our main issues, let us nevertheless note that the vector lattice algebra $\ell^\infty/I_u$ is not just a vector lattice algebra, but even an $\!f\!$-algebra. To see this, suppose that $x,y\in\ell^\infty$ are such that $q(x)\wedge q(y)=0$. We may suppose that $x,y\geq 0$. Take a positive element $q(z)$ of $\ell^\infty/I_u$, where  $z=(z_1,z_2,\dotsc)\in\ell^\infty$. We may suppose that $z\geq 0$. Since $q(x\wedge y)=q(x)\wedge q(y)=0$, we have $x\wedge y\in I_u$. Then the estimate
	\[
	0\leq ((xz)\wedge y)_n=(x_nz_n)\wedge y_n\leq (\norm{z}_\infty+1) (x_n\wedge y_n)
	\]
	for all $n$ shows that $(xz)\wedge y\in I_u$. Hence $(q(x)q(z))\wedge q(y)=q((xz)\wedge y)=0$, as required.
	
	It seems worthwhile to note explicitly that losing the Archimedean property when passing to a homomorphic image is not only possible in the category of vector lattices, but also in the much smaller subcategory of unital $\!f\!$-algebras with a positive identity element.
	
	\begin{proposition}
		There exist an Archimedean unital $\!f\!$-algebra $\vlaonepos$ with a positive identity element, a non-Archimedean $\!f\!$-algebra $B$ \uppars{automatically unital with a positive identity element} and a surjective vector lattice algebra homomorphism $\varphi:A\to B$.
	\end{proposition}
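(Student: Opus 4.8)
The plan is to exhibit $\ell^\infty$ together with the quotient $\ell^\infty/I_u$ from the preceding example as the required witnesses. First I would record that, under pointwise algebra operations and ordering, $\ell^\infty$ is an Archimedean unital $f\!$-algebra whose identity element $e=(1,1,1,\dots)$ is positive; this serves as $\vlaonepos$. Next I would note that the order ideal $I_u$ generated by $u=(1/n^2)_n$ is a bi-ideal of $\ell^\infty$: since every element of $\ell^\infty$ is a bounded sequence, multiplying an element of $I_u$ by an arbitrary element of $\ell^\infty$ yields a sequence still dominated by a scalar multiple of $u$, so $I_u$ is a two-sided algebra ideal as well as an order ideal. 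Consequently $B\coloneqq\ell^\infty/I_u$ is a vector lattice algebra and the quotient map $q:\ell^\infty\to B$ is a surjective vector lattice algebra homomorphism; being surjective it is automatically unital, and its identity $q(e)$ is positive, so $B$ is a unital vector lattice algebra with a positive identity element.

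It then remains to check the two interesting properties of $B$. For the failure of the Archimedean property I would take $v=(1/n)_n\in\ell^\infty$; since $v\notin I_u$ we have $q(v)>0$, while for each positive integer $k$ one has $v_n\leq e_n/k$ for all $n\geq k$, so $v$ is dominated by $x+e/k$ for a suitable finitely supported $x\in\ell^\infty$, and every finitely supported sequence lies in $I_u$, whence $k\,q(v)\leq q(e)$; as $q(v)>0$, the algebra $B$ is not Archimedean. For the $f\!$-algebra property, suppose $x,y\geq 0$ in $\ell^\infty$ satisfy $q(x)\wedge q(y)=0$, equivalently $x\wedge y\in I_u$, and let $0\leq z\in\ell^\infty$. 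The coordinatewise estimate
\[
0\leq\bigl((xz)\wedge y\bigr)_n=(x_nz_n)\wedge y_n\leq\bigl(\norm{z}_\infty+1\bigr)(x_n\wedge y_n)
\]
shows that $(xz)\wedge y\in I_u$, i.e.\ $(q(x)q(z))\wedge q(y)=0$, and symmetrically $(q(z)q(x))\wedge q(y)=0$; this is exactly the $f\!$-algebra implication in $B$.

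Since each of these verifications is carried out in the example immediately preceding the statement, the proof amounts to assembling them and putting $\vlaonepos\coloneqq\ell^\infty$, $B\coloneqq\ell^\infty/I_u$, and $\varphi\coloneqq q$. The only place calling for a little care — hence the main point to keep an eye on — is the observation that $I_u$ is an \emph{algebra} ideal and that the quotient stays an $f\!$-algebra rather than merely a vector lattice; both rest on the boundedness of the sequences in $\ell^\infty$, which is precisely what distinguishes $\ell^\infty$ from the algebra of all real sequences.
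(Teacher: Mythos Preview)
Your proposal is correct and follows essentially the same approach as the paper: the proposition is simply a summary of the preceding example, and you have reproduced that example faithfully, taking $\vlaonepos=\ell^\infty$, $B=\ell^\infty/I_u$ with $u=(1/n^2)_n$, and $\varphi=q$, together with the same verifications that $I_u$ is a bi-ideal, that $q(v)$ with $v=(1/n)_n$ witnesses the failure of the Archimedean property, and the same coordinatewise estimate $0\leq ((xz)\wedge y)_n\leq(\norm{z}_\infty+1)(x_n\wedge y_n)$ for the $f\!$-algebra implication.
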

	
\end{example}

Returning to the main line, let us remark that, for the remaining three classes of $\!f\!$-algebras, we do not know whether they are equational classes or not. We do not have an example showing that they are not, but our attempts to `equationalise' the validity of the $\!f\!$-algebra implication have failed. The latter does, of course, not show that it is impossible to do so.

A more indirect way to prove that they \emph{are} equational classes would be to try to use Birkhoff's theorem (see \cite[Theorem~4.41]{bergman_UNIVERSAL_ALGEBRA:2012}), which shows that the equational classes are precisely the varieties of abstract algebras. We recall that a variety of abstract algebras is a class of abstract algebras, all of the same type, that is closed under taking abstract subalgebras, abstract product algebras, and abstract homomorphic images. It is clear that each of the classes of $\!f\!$-algebras, of unital $\!f\!$-algebras, and of unital $\!f\!$-algebras with a positive identity element are closed under taking (unital) vector lattice subalgebras and taking vector lattice algebraic products. It seems to be open, however, whether any of these three classes is closed under the taking of abstract algebra homomorphic images, i.e., under (unital) vector lattice algebra homomorphic images. Hence we have the following questions.

\begin{questions}\label{ques:f_algebras}
	Let $\vla$ be an $\!f\!$-algebra, let $B$ be a vector lattice algebra, and let $\varphi: A\to B$ be a surjective vector lattice algebra homomorphism.
	\begin{enumerate}
		\item Is $B$ always an $\!f\!$-algebra? If so, then the $\!f\!$-algebras form an equational class and free $\!f\!$-algebras over non-empty sets exist.
		
		\item Is $B$ always an $\!f\!$-algebra \uppars{automatically unital} when $\vla$ is unital? If so, then the unital $\!f\!$-algebras form an equational class and free unital $\!f\!$-algebras over non-empty sets exist.

		\item Is $B$ always an $\!f\!$-algebra \uppars{automatically unital with a positive identity element} when $\vla$ is unital with a positive identity element? If so, then the unital $\!f\!$-algebras with a positive identity element form an equational class and free unital $\!f\!$-algebras with a positive identity element over non-empty sets exist.
	\end{enumerate}
\end{questions}

We have no answers. Related to all three questions, we can only mention that it is known that the answers are affirmative, \emph{provided} that we also know that both $\vla$ and $B$ are Archimedean; this follows from \cite[Proposition~3.2]{boulabiar:2002}. Related to the second and third question, we can only mention that it is known that an Archimedean vector lattice algebra with an identity element is an $\!f\!$-algebra precisely when all squares are positive;  see \cite[Corollary~1]{steinberg:1976}. This shows once more that the answers to the second and third questions are affirmative, \emph{provided} that we also know that both $\vla$ and $B$ are Archimedean. Unfortunately, this is not what we need, and to the best of our knowledge these three issues are all open at the time of writing.

Even though the Archimedean $\!f\!$-algebras, the Archimedean unital $\!f\!$-algebras, and the Archimedean unital $\!f\!$-algebras with a positive identity are not an equational class, and the $\!f\!$-algebras, the unital $\!f\!$-algebras, or the unital $\!f\!$-algebras with a positive identity element might also not be, this does not preclude the possibility that one or more of these six classes still has free objects over non-empty sets. After all, the Archimedean vector lattices do not form an equational class because there exist quotients of such lattices that are no longer Archimedean, but free Archimedean vector lattices over non-empty sets still exist. The reason is simply that the general free vector lattice over a non-empty set, which has a model as a lattice of real-valued functions,  just happens to be Archimedean. This fact does not appear to be accessible with methods from universal algebra or category theory alone; one really has to look at the internal structure of the free object once one knows that it exists as is done in \cite{bleier:1973}, for example. It is conceivable that something similar may be the case for $\!f\!$-algebras, where free vector lattice algebras of various kinds over non-empty sets\textemdash the existence of which is guaranteed by the general result for equational classes\textemdash may happen to be objects of a much smaller subcategory of $\!f\!$-algebras, where they are then evidently also free objects over non-empty sets. The following questions are, therefore, natural to ask:

\begin{questions}
	Let $S$ be a non-empty set.
	\begin{enumerate}
		\item Is $\FSETVLA{S}$ an $\!f\!$-algebra? Is it Archimedean?
		\item Is $\FSETVLAONE{S}$ an $\!f\!$-algebra? Is it Archimedean?
		\item Is $\FSETVLAONEPOS{S}$ an $\!f\!$-algebra? Is it Archimedean?
	\end{enumerate}
\end{questions}

Of course, if $\FSETVLAONE{S}$ or $\FSETVLAONEPOS{S}$ is Archimedean, or an $\!f\!$-algebra, then the same is true for its vector lattice subalgebra $\FSETVLA{S}$.


\subsection*{Acknowledgements} It is a pleasure to thank Karim Boulabiar, Ben de Pagter, Mitchell Taylor, and Vladimir Troitsky for helpful discussions.





\bibliography{general_bibliography}


\end{document}